\theoremstyle{plain}
\newtheorem {Lem}{Lemma}
\newtheorem {The}[Lem]{Theorem}
\newtheorem {Cor}[Lem]{Corollary}
\newtheorem {Prop}[Lem]{Proposition}
\theoremstyle{remark}
\theoremstyle{definition}
\newtheorem {defn}[Lem]{Definition}
\newcommand\Label[1]{\label{#1}}
\newcommand{\ep}{\varepsilon}
\newcommand{\gm}{\mathfrak m}
\newcommand{\GL}{\operatorname{GL}}
\newif\ifcomm
\let\ifcomm\iffalse
\newcommand{\LF}{\lfloor}
\newcommand{\RF}{\rfloor}
\title{Multiple Commutator Formulas}
\author{R. Hazrat}
\address{Department of  Pure Mathematics, Queen's University Belfast, Belfast BT7 1NN, Northern Ireland, United Kingdom}
\email{r.hazrat@qub.ac.uk}
\author{Z. Zhang}
\address{Department of  Mathematics, Beijing Institute of Technology, Beijing, China}
\email{zuhong@gmail.com}
\begin{document}

\begin{abstract}
Let $A$ be  a quasi-finite $R$-algebra (i.e., a direct limit of module finite algebras) with identity. Let $I_i $, $i=0,...,m$, be two-sided  ideals of $A$, $\GL_n(A,I_i)$ the principal congruence subgroup of level $I_i$ in $\GL_n(A)$ and  $E_n(A,I_i)$ be the relative elementary subgroup of level $I_i$. We prove  a multiple commutator formula 
\[
\begin{split}
\big [E_n(A,I_0),\GL_n(A,I_1),& \GL_n(A, I_2),\ldots, \GL_n(A, I_m) \big]\\
&=\big[E_n(A,I_0),E_n(A,I_1),E_n(A, I_2),\ldots, E_n(A, I_m) \big],
\end{split}
\]
which is a broad generalization of the standard commutator formulas. This result contains all the published results of commutator formulas over commutative rings and answers a problem posed by A. Stepanov and N. Vavilov (cf. Problem~4 in~\cite{NVAS}). 
\end{abstract}

\maketitle

\section*{Introduction}
Let $A$ be an associative  ring with 1, $\GL_n(A)$  the general linear group of degree $n$ over $A$, and let $E_n(A)$ be its elementary subgroup. For a two-sided ideal $I$ of $A$, we denote the principal congruence subgroup of level $I$ by $ \GL_n(A,I)$ and  the relative elementary subgroup of level $I$ by $E_n(A,I)$ (see~\S\ref{elel}). 

One of the major contributions towards non-stable $K$-theory of rings is the work of Suslin \cite{SUS,TUL} who proved that if $A$ is a {\it module finite ring\/} namely, a ring
that is finitely generated as module over its center, and $n\ge 3$
then $E_n(A)$ is a normal subgroup of $\GL_n(A)$. Thus the non-stable $K_1$, i.e., $\GL_n(A)/E_n(A)$,  can be defined.  
Later Borevich and Vavilov \cite{borvav} and Vaserstein \cite{V1}, independently,  building on Suslin's method 
established the {\it standard commutator formula}:

\begin{The}[Suslin, Borevich-Vavilov, Vaserstein]\label{standard}
Let $A$ be a module finite ring, $I$ a two-sided ideal of $A$ 
 and $n\ge 3$. Then $E_n(A,I)$ is normal in $\GL_n(A)$, i.e., 
 \[
\big[E_n(A,I),\GL_n(A)\big]=E_n(A,I).
\] 
 Furthermore
\[
\big[E_n(A),\GL_n(A,I)\big]=E_n(A,I).
\] 
\end{The}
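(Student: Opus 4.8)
The plan is to deduce both commutator formulas from a single relative version of Suslin's lemma, the module finite hypothesis entering only --- but essentially --- in the proof of that lemma. The inclusions $\supseteq$ are formal: since $n\ge 3$ we may pick three distinct indices $i,j,k$ and use $e_{ij}(a)=[e_{ik}(a),e_{kj}(1)]$; applying this to a conjugate ${}^{\varepsilon}e_{ij}(a)=\big[{}^{\varepsilon}e_{ik}(a),{}^{\varepsilon}e_{kj}(1)\big]$ with $\varepsilon\in E_n(A)$, $a\in I$, and using that $\GL_n(A,I)$ is normal in $\GL_n(A)$, so that ${}^{\varepsilon}e_{ik}(a)\in\GL_n(A,I)$, we see that these conjugates --- which generate $E_n(A,I)$ --- all lie in $[E_n(A),\GL_n(A,I)]$, and hence also in $[E_n(A,I),\GL_n(A)]$. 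All the content is thus in the reverse inclusions.

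The key lemma I would isolate is a relative form of Suslin's lemma, valid for $n\ge 3$: \emph{if $v\in A^{n}$ is a column and $u\in{}^{n}A$ a row with $uv=0$, and all entries of $u$ lie in a two-sided ideal $J$, then $\mathbf 1_{n}+vu\in E_n(A,J)$.} Over a commutative ring this is Suslin's classical lemma, proved by an explicit chain of elementary row and column operations; this can fail over a general associative ring, and it is precisely here that the module finite hypothesis is used. Following Suslin and Tulenbaev, I would prove it by localizing at the maximal ideals $\mathfrak m$ of the centre $C=Z(A)$: each $A_{\mathfrak m}$ is module finite over the local ring $C_{\mathfrak m}$, hence semilocal, so over $A_{\mathfrak m}$ one can complete the row $u$ and factor $\mathbf 1_{n}+vu$ into elementary matrices with denominators prime to $\mathfrak m$, and then patch these local factorizations back together over $A$. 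I expect this localization-and-patching step --- transporting the commutative computation to the module finite noncommutative setting --- to be the principal technical obstacle.

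Granting the lemma, the normality $[E_n(A,I),\GL_n(A)]\subseteq E_n(A,I)$ is immediate: it is enough to check ${}^{g}e_{ij}(a)\in E_n(A,I)$ for all $g\in\GL_n(A)$, $a\in I$ and $i\ne j$, since the conjugates ${}^{\varepsilon}e_{ij}(a)$ generate $E_n(A,I)$ and $g\varepsilon$ again runs through $\GL_n(A)$; and one computes ${}^{g}e_{ij}(a)=\mathbf 1_{n}+vu$, where $v$ is the $i$-th column of $g$ and $u$ is the $j$-th row of $g^{-1}$ multiplied on the left by $a$, so that $uv=a\,(g^{-1}g)_{ji}=0$ (because $i\ne j$) and the entries of $u$ lie in $I$; the lemma with $J=I$ finishes it.

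For the remaining inclusion $[E_n(A),\GL_n(A,I)]\subseteq E_n(A,I)$ I would first reduce, using $[h_1h_2,g]={}^{h_1}[h_2,g]\,[h_1,g]$ together with the normality of $E_n(A,I)$ in $E_n(A)$, to proving $[e_{ij}(b),g]\in E_n(A,I)$ for each $b\in A$, $g\in\GL_n(A,I)$, $i\ne j$. Writing the $i$-th column of $g$ as $e_i+p$ with $p\in I^{n}$ and the $j$-th row of $g^{-1}$ as $e_j+q$ with $q\in{}^{n}I$, and expanding, one gets $[e_{ij}(b),g]=\mathbf 1_{n}+M$ with $M\in M_n(I)$ of rank at most two, built from $p$, $q$ and their product. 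This is a rank-two rather than a rank-one perturbation, so the lemma above does not apply verbatim; here I would follow Vaserstein's treatment of the relative case, which provides the extra bookkeeping needed to write $\mathbf 1_{n}+M$ as an explicit product of elementary matrices $e_{kl}(c)$ with $c\in I$ and of their $E_n(A)$-conjugates, the ideal membership now being supplied by $g\in\GL_n(A,I)$ rather than by $a\in I$. Combined with the $\supseteq$ inclusions, this yields both commutator formulas.
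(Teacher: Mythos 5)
The paper does not actually prove Theorem~\ref{standard}: it is stated as a classical result of Suslin--Tulenbaev, Borevich--Vavilov and Vaserstein, with the proof delegated to the cited sources \cite{SUS,TUL,borvav,V1}, so there is no in-paper argument to compare yours against. Your outline follows essentially the route of those sources, and the parts you write out in full are correct: the inclusions $\supseteq$ via $e_{i,j}(a)=[e_{i,k}(a),e_{k,j}(1)]$ (note that ${}^{\varepsilon}e_{i,k}(a)$ lies in $E_n(A,I)$ as well as in $\GL_n(A,I)$, which is what makes the single computation give both inclusions at once), and the deduction of $\big[E_n(A,I),\GL_n(A)\big]\subseteq E_n(A,I)$ from the relative rank-one lemma via ${}^{g}e_{i,j}(a)=1+vu$ with $uv=0$ and the entries of $u$ in $I$. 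What the proposal does not supply is precisely the hard content: (i) the relative Suslin lemma over a module finite, generally noncommutative, $R$-algebra, for which your plan of localizing at maximal ideals of the centre, using semilocality of $A_\gm$, and patching is indeed the Suslin--Tulenbaev/Bak strategy but is only sketched (observe also that in the application $v$ is a column of an invertible matrix, hence unimodular, so you only need the unimodular case, which is what the classical argument via the generation of the kernel of $x\mapsto xv$ actually delivers); and (ii) the decomposition of $[e_{i,j}(b),g]$ for $g\in\GL_n(A,I)$ into elementary matrices of level $I$ and their conjugates, which you delegate to Vaserstein's relative calculus. Since these two ingredients are exactly the theorems of the papers this result is attributed to, deferring to them is legitimate; as a self-contained proof, however, those steps remain open, and they are where all the difficulty of the module finite case is concentrated.
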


One natural question raised here is whether one has a ``finer'' mixed commutater formulas involving two ideals. In fact this had already been established by Bass for general linear groups of degrees sufficiently larger than the stable rank when he proved his celebrated classification of subgroups of $\GL_n$ normalized by $E_n$ (see \cite[Theorem~4.2]{Bass2}). 

\begin{The}[Bass]\label{hhggff}
Let $A$ be a ring, $I,J$ two-sided ideals of $A$ and $n \geq \max(sr(R)+1,3)$. Then 
\[ \big [E_n(A,I),\GL_n(A,J) \big ]=\big [E_n(A,I),E_n(A,J) \big ]. \]
\end{The}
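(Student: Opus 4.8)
\medskip\noindent\textbf{Proof plan.}
The inclusion $\supseteq$ is trivial since $E_n(A,J)\subseteq\GL_n(A,J)$, so the task is to prove $\subseteq$. Set $C=[E_n(A,I),E_n(A,J)]$ and write $[x,y]=xyx^{-1}y^{-1}$. Since $n\ge sr(R)+1$, the relative elementary subgroups $E_n(A,I)$ and $E_n(A,J)$ are normal in $\GL_n(A)$ (part of Bass's classification of subgroups normalized by $E_n$, cf.\ \cite{Bass2}; for module finite rings it is Theorem~\ref{standard}), hence so is $C$. Using $[xy,g]=x[y,g]x^{-1}\cdot[x,g]$ and the fact that $C$ is stable under conjugation by $E_n(A)$, it is enough to prove $[\xi,g]\in C$ for every $g\in\GL_n(A,J)$ and every $\xi$ in the familiar generating set of $E_n(A,I)$, which for $n\ge 3$ may be taken to consist of the matrices $e_{ij}(a)$ and $e_{ji}(c)e_{ij}(a)e_{ji}(-c)$ with $a\in I$, $c\in A$, $i\ne j$ (an induction on word length in these generators then yields $[E_n(A,I),\GL_n(A,J)]\subseteq C$). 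Moreover the identity $[uvu^{-1},g]=u\,[v,u^{-1}gu]\,u^{-1}$, with $u=e_{ji}(c)$ and $v=e_{ij}(a)$, exhibits $[e_{ji}(c)e_{ij}(a)e_{ji}(-c),g]$ as the conjugate by $e_{ji}(c)\in E_n(A)$ of $[e_{ij}(a),\,e_{ji}(-c)ge_{ji}(c)]$, where $e_{ji}(-c)ge_{ji}(c)\in\GL_n(A,J)$; so the conjugated generators reduce to the generators $e_{ij}(a)$. Everything therefore comes down to showing
\[
[e_{ij}(a),g]\in C\qquad(a\in I,\ g\in\GL_n(A,J)),
\]
and after permuting coordinates we may assume $(i,j)=(1,2)$.

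This is where the stable rank bound is used. By relative surjective stability (classical for $n\ge sr(R)+1$), the first column of $g\in\GL_n(A,J)$ — a unimodular vector congruent to $e_1$ modulo $J$ — can be carried to $e_1$ by left multiplication by an element of $E_n(A,J)$; hence $g=\zeta h$ with $\zeta\in E_n(A,J)$ and $h=\bigl(\begin{smallmatrix}1 & v^{\mathsf T}\\ 0 & h'\end{smallmatrix}\bigr)$, where $h'\in\GL_{n-1}(A,J)$ and $v\in J^{n-1}$. Since $[e_{12}(a),\zeta]\in[E_n(A,I),E_n(A,J)]=C$, the identity $[x,yz]=[x,y]\cdot y[x,z]y^{-1}$ together with the normality of $C$ shows that $[e_{12}(a),g]\in C$ if and only if $[e_{12}(a),h]\in C$. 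A direct $2\times 2$-block computation (in which the contribution of $v$ cancels) yields
\[
[e_{12}(a),h]=\begin{pmatrix}1 & a\,\mathbf e^{\mathsf T}(h'-I)(h')^{-1}\\ 0 & I\end{pmatrix},\qquad \mathbf e=(1,0,\dots,0)^{\mathsf T}\in A^{n-1},
\]
a matrix equal to the identity outside its first row and whose entries in positions $(1,k)$, $k=2,\dots,n$, lie in $IJ$ (the first row of $h'-I$ has entries in $J$ and $a\in I$). Hence $[e_{12}(a),h]=\prod_{k=2}^{n}e_{1k}(t_k)$ with $t_k\in IJ$.

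Finally, one checks that $e_{1k}(t)\in C$ for every $t\in IJ$: writing $t=\sum_\nu b_\nu c_\nu$ with $b_\nu\in I$ and $c_\nu\in J$, and choosing an index $l\notin\{1,k\}$ (possible since $n\ge 3$), the Steinberg relation gives $e_{1k}(b_\nu c_\nu)=[e_{1l}(b_\nu),e_{lk}(c_\nu)]\in[E_n(A,I),E_n(A,J)]=C$, whence $e_{1k}(t)=\prod_\nu e_{1k}(b_\nu c_\nu)\in C$. Combined with the previous paragraph this gives $[e_{12}(a),h]\in C$, completing the argument. The one genuinely non-formal part — and the main obstacle — is securing the two inputs that truly require $n\ge sr(R)+1$, namely normality of the relative elementary subgroups in $\GL_n(A)$ and relative surjective stability for $\GL_n(A,J)$; once those classical facts are in hand the remainder is pure commutator calculus plus one block-matrix identity.
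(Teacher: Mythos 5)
Your argument is essentially correct, but note that it does not parallel anything in the paper: Theorem~\ref{hhggff} is stated there without proof, as a quotation of Bass's Theorem~4.2 in \cite{Bass2}, so what you have produced is in effect a reconstruction of Bass's classical stable-range argument. The comparison is still instructive. Your proof uses exactly the two inputs that the hypothesis $n\ge\max(sr+1,3)$ makes available: the Suslin--Vaserstein generating set for $E_n(A,I)$ (the paper's Lemma~\ref{Engenerator}) and, crucially, relative transitivity of $E_n(A,J)$ on unimodular columns congruent to $e_1$ modulo $J$, which gives the decomposition $g=\zeta h$ with $h$ fixing $e_1$; your block computation and the Steinberg relation $e_{1k}(b\,c)=[e_{1l}(b),e_{lk}(c)]$ (the first inclusion of Lemma~\ref{Lem:Habdank}) then finish the job at level $IJ$. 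That column-reduction step is precisely what is unavailable without a stable rank bound, which is why the paper's generalization to module finite and quasi-finite rings (the generalized commutator formula of \cite{RHZZ1}, and Theorem~\ref{main} here) replaces it with localization, patching and the conjugation calculus of Lemma~\ref{LemHZ}; your route is more elementary but intrinsically tied to the hypothesis $n\ge sr+1$. Two small remarks: your appeal to normality of $E_n(A,I)$ and $E_n(A,J)$ in $\GL_n(A)$ is heavier than necessary, since you only ever conjugate $C=[E_n(A,I),E_n(A,J)]$ by elements of $E_n(A)$ (initial segments of words in the generators, the matrices $e_{ji}(c)$, and $\zeta\in E_n(A,J)$), and $E_n(A)$-invariance of $C$ is automatic because each relative elementary subgroup is by definition a normal closure inside $E_n(A)$; and the ``permutation of coordinates'' should be carried out by conjugation with monomial matrices lying in $E_n(A)$ (or by running the same column argument on the $i$-th column), but both points are cosmetic and the proof stands.
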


Later Mason and Stothers building on Bass' result prove (\cite[Theorem~3.6, Corollary~3.9]{MAS3}, and \cite[Theorem~1.3]{MAS1}):

\begin{The}[Mason-Stothers]
Let $A$ be a ring, $I,J$ two-sided ideals of $A$ and $n \geq \max(sr(R)+1,3)$. Then 
\[ \big [\GL_n(A,I),\GL_n(A,J) \big ]=\big [E_n(A,I),E_n(A,J) \big ]. \]
\end{The}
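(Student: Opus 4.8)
My plan runs through Bass' theorem (Theorem~\ref{hhggff}) and the standard stable‑range facts, with one genuinely computational step at the end.

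The inclusion $\supseteq$ is immediate, since $E_n(A,I)\le\GL_n(A,I)$ and $E_n(A,J)\le\GL_n(A,J)$, so the content is the reverse inclusion $[\GL_n(A,I),\GL_n(A,J)]\subseteq H$, where I put $H:=[E_n(A,I),E_n(A,J)]$. I would first assemble the formal ingredients. By the standard commutator formula (Theorem~\ref{standard}, in the form valid whenever $n\ge\max(sr(R)+1,3)$), $E_n(A,I)$ and $E_n(A,J)$ are normal in $\GL_n(A)$, hence so is $H$. Applying Theorem~\ref{hhggff}, and then again with $I$ and $J$ interchanged together with $[X,Y]=[Y,X]$, gives $H=[E_n(A,I),\GL_n(A,J)]=[\GL_n(A,I),E_n(A,J)]$. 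Finally, in this range $\GL_n(A,I)$ is central modulo $E_n(A,I)$, i.e. $[\GL_n(A),\GL_n(A,I)]\subseteq E_n(A,I)$, and likewise for $J$ (Bass--Vaserstein).

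Next I would pass to $\bar G:=\GL_n(A)/H$ and denote by $\bar X,\bar Y,\bar E_I,\bar E_J$ the images of $\GL_n(A,I),\GL_n(A,J),E_n(A,I),E_n(A,J)$. The ingredients above translate into $[\bar E_I,\bar Y]=[\bar X,\bar E_J]=1$, $[\bar G,\bar X]\subseteq\bar E_I$ and $[\bar G,\bar Y]\subseteq\bar E_J$; in particular $[\bar X,\bar Y]\subseteq\bar E_I\cap\bar E_J$, an abelian subgroup on which both $\bar X$ and $\bar Y$ act trivially by conjugation. Using the identities $[ab,c]=a[b,c]a^{-1}\cdot[a,c]$ and $[a,bc]=[a,b]\cdot b[a,c]b^{-1}$ one checks that $(\bar x,\bar y)\mapsto[\bar x,\bar y]$ descends to a map $(\bar X/\bar E_I)\times(\bar Y/\bar E_J)\to\bar G$ which is biadditive in each variable and whose values are fixed by $\bar G$-conjugation; hence $[\bar X,\bar Y]$ is a \emph{central} subgroup of $\bar G$, and proving the theorem amounts to showing this pairing is trivial. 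Since the value depends only on $x$ modulo $E_n(A,I)$ and $y$ modulo $E_n(A,J)$, I would then invoke relative surjective stability, $\GL_n(A,I)=E_n(A,I)\cdot\GL_{n-1}(A,I)$ for $n\ge sr(R)+1$ (and the analogue for $J$), where $\GL_{n-1}$ is the upper-left corner, to reduce to the case where $x$ and $y$ both lie in the standard $(n-1)\times(n-1)$ corner, fixing the $n$-th basis vector.

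This corner case is the technical heart, and is where I expect the real obstacle. Following Mason and Stothers, I would exploit the free $n$-th coordinate to carry out explicit elementary reductions on $x$ and $y$ of levels $I$ and $J$ respectively (such corrections lie in $H$ by the first paragraph), arranging $[x,y]$ as a product of commutators $[e_{ij}(a),e_{kl}(b)]$ with $a\in I$, $b\in J$; every such commutator lies in $H$ — indeed in $E_n(A,IJ+JI)\subseteq H$, since the normal subgroup $H$ of $\GL_n(A)$ contains all $e_{ik}(ab)=[e_{ij}(a),e_{jk}(b)]$ and $e_{ik}(ba)=[e_{ij}(b),e_{jk}(a)]$. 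Thus $[x,y]\in H$, which finishes the argument. Everything up to this point is soft group theory layered on Theorem~\ref{hhggff} and the standard stable-range facts; the nonformal content is precisely this last combinatorial step, and it is where the hypothesis $n\ge\max(sr(R)+1,3)$ enters essentially beyond the centrality statement — the tightest situation being $n=sr(R)+1$, when only one coordinate is free (for $sr(R)=1$ one can even push $x$ and $y$ into the $1\times1$ corner, where they become diagonal and commute, which indicates the mechanism).
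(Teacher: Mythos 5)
First, a point of comparison: the paper does not prove this statement at all. It is quoted as background, attributed to Mason and Stothers (building on Bass), with references to \cite{MAS3} and \cite{MAS1}. So there is no internal proof to measure your argument against; it has to be judged against the cited literature, whose real content is exactly the step you leave open.

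Your soft reductions are correct and sensibly organized: normality of $H=[E_n(A,I),E_n(A,J)]$, the identities $[E_n(A,I),\GL_n(A,J)]=H=[\GL_n(A,I),E_n(A,J)]$ from Theorem~\ref{hhggff}, the classical stable-range inclusion $[\GL_n(A),\GL_n(A,I)]\subseteq E_n(A,I)$, the resulting central, biadditive, conjugation-invariant pairing on $\big(\GL_n(A,I)/E_n(A,I)\big)\times\big(\GL_n(A,J)/E_n(A,J)\big)$ with values in $\GL_n(A)/H$, and relative surjective stability $\GL_n(A,I)=\GL_{n-1}(A,I)E_n(A,I)$ for $n\ge sr+1$. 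This legitimately reduces the theorem to showing $[x,y]\in H$ for $x\in\GL_{n-1}(A,I)$ and $y\in\GL_{n-1}(A,J)$ in the upper-left corner. But at exactly that point the proposal stops being a proof: ``carry out explicit elementary reductions\ldots arranging $[x,y]$ as a product of commutators $[e_{i,j}(a),e_{k,l}(b)]$'' is not an argument, it is a restatement of the theorem in the corner case, and nothing you assembled (centrality, biadditivity, invariance) produces such a factorization. With only one free coordinate the hyperbolic/Whitehead tricks that make the stable case trivial are unavailable, and this corner computation is precisely where the bulk of \cite{MAS3} and \cite{MAS1} is spent; so the decisive step is missing. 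A symptom that it is being underestimated: your closing sanity check is false for noncommutative $A$ --- when $sr=1$ and $x,y$ are pushed into the $1\times1$ corner they become $\mathrm{diag}(u,1,\ldots,1)$ and $\mathrm{diag}(v,1,\ldots,1)$ with $u\in 1+I$, $v\in 1+J$, which need not commute, and showing $\mathrm{diag}([u,v],1,\ldots,1)\in[E_n(A,I),E_n(A,J)]$ is already a relative Whitehead-lemma type computation. (A smaller caveat: the inclusion $[\GL_n(A),\GL_n(A,I)]\subseteq E_n(A,I)$ at the edge $n=sr+1$ is the case $J=A$ of the very theorem under discussion; it is indeed classical (Bass \cite{Bass2}, Vaserstein), but it should be cited as such rather than treated as free.)
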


There are (counter)examples that the Mason-Strothers Theorem does not hold for general module finite rings \cite{B4}. However recently Stepanov and Vavilov \cite{NVAS,NVAS2} proved Bass' Theorem~\ref{hhggff} for any commutative ring and $n\geq 3$ and the authors using Bak's localization and patching method extend it to all module finite rings \cite{RHZZ1}. We refer to this as the {\it generalized commutator formula}. In \cite{NVAS} it is asked whether one can establish a multiple commutator formula, namely  for a commutative ring $R$, $I_i $, $i=0,...,m$, ideals of $R$ and $n\geq 3$,  whether 
\begin{equation}\label{minfr}
\begin{split}
\big [E_n(R,I_0),\GL_n(R,I_1),& \GL_n(R, I_2),\ldots, \GL_n(R, I_m) \big]\\
&=\big[E_n(R,I_0),E_n(R,I_1),E_n(R, I_2),\ldots, E_n(R, I_m) \big],
\end{split}
\end{equation}
is valid which is a broad generalization of the standard/generalized commutator formulas. Here for simplicity we write 
$[A_1,A_2,A_3,\dots,A_n]$ for $\big[\dots\big[[A_1,A_2],A_3\big],\dots,A_n\big]$ (see \S\ref{sub:1.4}). Questions of this type arise from the study of subnormal subgroups of $\GL_n$ from one hand and the nilpotent structure of nonstable $K_1$ from the other hand 
(see~\cite[\S10 and \S12]{RN} for a survey on these topics).

In this paper we prove Formula~(\ref{minfr}) for quasi finite rings (which include module finite and commutative rings) (see Corollary~\ref{comain}). In particular this result contains all the published results of commutator formulas over commutative rings.  In fact in Theorem~\ref{main2} we show that the multiple commutator formulas are valid for any meaningful way of the distribution of commutators.

To establish these results, we use the general ``yoga of commutators'' which are developed in~\cite{RHZZ1} and~\cite{RVZ1} based on the work of Bak on the localization and patching in general linear groups (see \cite{B4,yoga} and \cite[\S13]{RN}).  In order to utilize this method, one needs to overcome two problems. First to devise an appropriate conjugation calculus to approach the identity~(\ref{minfr}) and then perform the actual calculations. Both of these are equally challenging as the nature of conjugation calculus depends on the problem in hand. In fact the term  yoga of commutators is chosen
to stress the overwhelming feeling of technical strain and exertion.
 However once this is done for general linear groups, one can  
adapt the approach to more complex settings, such as general quadratic groups and  
Chevalley groups. These shall be established in a sequel to this paper.


\section{Preliminaries}
In this section we fix some notations. At the same time, we list some preliminary results concerning the localization and patching method without proofs. We refer to Bak's original paper \cite{B4} or a survey version in \cite[\S13]{RN} for details.

\subsection{}\Label{sub:1.1}
Let $R$ be a commutative ring with $1$,  $S$ a multiplicative closed system in $R$ and $A$ an $R$-algebra.  Then $S^{-1}R$ and $S^{-1}A$ denote the corresponding localization. In the current paper, we mostly use localization with respect to the following two types of multiplicative systems. 

\noindent 1.) For any $s\in R$, the multiplicative system generated by $s$ is defined as 
$$\langle s \rangle = \{1, s ,s^2,\ldots \}.$$
The localization with respect to multiplicative system $\langle s\rangle$ is usually denoted by $R_s$ and $A_s$. Note that, for any $\alpha \in R_s$, there exists an integer $n$ and an element  $a\in R$ such that 
$\alpha=a/{s^n}$.

\noindent 2.) If $\gm$ is a maximal ideal of $R$, and $S=R\backslash \gm$ a multiplicative system, then we denote the localization with respect to $S$ by $R_\gm$ and $A_\gm$. 

For a multiplicative system $S$, the canonical localization map with respect to $S$ is denoted by $\theta_S: R\to S^{-1}R$. For the special cases mentioned above, we write 
$\theta_s: R\to R_s$ and $\theta_M: R\to R_M$, respectively.

\subsection{}\Label{sub:1.2} An $R$-algebra $A$ is called {\em module finite} over $R$, if $A$ is finitely generated as an $R$-module. An $R$-algebra $A$ is  called {\em quasi-finite} over $R$ if there is a direct system of module finite  $R$-subalgebras $A_i$ of $A$ such that $\varinjlim A_i=A$. 
\begin{Prop}\Label{Prop:01}
An $R$-algebra $A$ is quasi-finite over R if and only if it satisfies the following equivalent conditions:
\begin{itemize}
\item[(1)] There is a direct system of subalgebras $A_i/R_i$ of $A$ such that each $A_i$ is module finite over $R_i$ and such that 
$\varinjlim R_i=R$ and $\varinjlim A_i=A$.
\item[(2)] There is a direct system of subalgebras $A_i/R_i$ of $A$ such that each $A_i$ is module finite over $R_i$ and each $R_i$ is finitely generated as a $\mathbb Z$-algebra and such that $\varinjlim R_i=R$ and $\varinjlim A_i=A$.
\end{itemize}
\end{Prop}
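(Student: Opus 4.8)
The plan is to establish the implications "quasi-finite $\Rightarrow$ (2) $\Rightarrow$ (1) $\Rightarrow$ quasi-finite'', since the last is trivial (a direct system as in (1) is in particular a direct system of module finite subalgebras, over possibly varying base rings, but one can then enlarge each $R_i$ to $R$ itself, noting that $A_i$ remains finitely generated as an $R$-module because it is already finitely generated over the subring $R_i$). So the content is entirely in producing, from a presentation $A=\varinjlim A_i$ with each $A_i$ module finite over $R$, a refined system in which the base rings also shrink to finitely generated $\mathbb{Z}$-algebras.

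First I would fix, for each index $i$, a finite generating set $a_{i,1},\dots,a_{i,k_i}$ of $A_i$ as an $R$-module. Writing out the multiplication table $a_{i,p}a_{i,q}=\sum_r c^{(i)}_{pqr}a_{i,r}$ and the identity element $1=\sum_r d^{(i)}_r a_{i,r}$ in terms of these generators involves only finitely many structure constants $c^{(i)}_{pqr},d^{(i)}_r\in R$; similarly, for each transition map $A_i\to A_j$ in the direct system, the images of the generators of $A_i$ are $R$-linear combinations of the generators of $A_j$, introducing finitely many more elements of $R$. Now I would let $R_i\subseteq R$ be the $\mathbb{Z}$-subalgebra generated by all structure constants and transition coefficients needed to describe $A_i$ and all transition maps into $A_i$ from earlier indices; this is finitely generated over $\mathbb{Z}$ by construction. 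Setting $A_i':=\sum_p R_i\, a_{i,p}\subseteq A$, the closure of the chosen structure constants under the subalgebra operations shows $A_i'$ is an $R_i$-subalgebra of $A$, module finite over $R_i$, and the transition maps restrict to give a direct system with $\varinjlim A_i'=\bigcup A_i'$.

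The main obstacle is bookkeeping rather than conceptual: one must arrange the index set and the choices of generators coherently so that $\varinjlim R_i=R$ and $\varinjlim A_i'=A$ simultaneously. For the first, note that every element of $R$ appears among the structure constants once we also throw into the system, cofinally, the one-generator $\mathbb{Z}$-subalgebras $\mathbb{Z}[r]$ for $r\in R$ (adjoining $r\cdot 1_{A_i}$ forces $r\in R_i$); since the original index set can be enlarged to a directed set containing these, $\varinjlim R_i = R$. For the second, since $\varinjlim A_i=A$, every element of $A$ lies in some $A_i$, hence is an $R$-linear combination of the $a_{i,p}$; enlarging $R_i$ if necessary to contain those particular coefficients (again a cofinal enlargement of the index set), the element lies in $A_i'$, so $\varinjlim A_i'=A$. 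This proves (2), and (2)$\Rightarrow$(1) is immediate by forgetting the condition that the $R_i$ be finitely generated over $\mathbb{Z}$. $\square$
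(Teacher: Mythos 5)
A point of reference first: the paper itself gives no proof of Proposition~\ref{Prop:01}; it is quoted from Bak \cite{B4} (see also \cite[\S13]{RN}), so your argument can only be judged on its own terms. Your overall strategy is the standard one: extract the finitely many structure constants of each $A_i$ with respect to a chosen finite set of $R$-module generators, together with coefficients of the transition maps, generate finitely generated $\mathbb Z$-subalgebras $R_i\subseteq R$ from them, and set $A_i'=\sum_p R_i a_{i,p}$. The easy implications are essentially fine, with one small correction: in (1) $\Rightarrow$ quasi-finite, $A_i$ need not be an $R$-submodule of $A$, so you should pass to $R\cdot A_i$, which is an $R$-subalgebra of $A$ generated as an $R$-module by the same finite set.

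The bookkeeping you defer, however, is exactly where the construction as written breaks. You define $R_i$ to be generated by the structure constants of $A_i$ and the coefficients of \emph{all} transition maps into $A_i$ from earlier indices; in a general directed system an index may have infinitely many predecessors (e.g.\ in the canonical system of all module finite $R$-subalgebras, ordered by inclusion), so this $R_i$ need not be finitely generated over $\mathbb Z$ --- the very property that (2) requires. Moreover, nothing in your definition guarantees $R_i\subseteq R_j$ for $i\le j$ (the structure constants of $A_i$ need not occur among the data attached to $A_j$), so the $R_i$ and the $A_i'$ do not obviously form direct systems over the original index set; and the proposed remedy of ``throwing in cofinally the one-generator subalgebras $\mathbb Z[r]$'' is not a well-formed enlargement of the index set, whose members are subalgebras of $A$, not subrings of $R$. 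The standard repair is to change the index set: index by pairs $(i,R_0)$ where $R_0\subseteq R$ is a finitely generated $\mathbb Z$-subalgebra containing the structure constants of $A_i$, with $(i,R_0)\le (j,R_1)$ when $i\le j$, $R_0\subseteq R_1$, and $R_1$ contains chosen coefficients expressing the generators of $A_i$ in those of $A_j$. Directedness is then immediate, each $A_{(i,R_0)}=\sum_p R_0a_{i,p}$ is an $R_0$-subalgebra module finite over $R_0$, and $\varinjlim R_0=R$, $\varinjlim A_{(i,R_0)}=A$ follow by adjoining finitely many elements of $R$ at a time. With that re-indexing your argument goes through; without it, the construction you state does not deliver condition (2).
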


\subsection{}\Label{sub:1.4} Let $G$ be a group. For any $x,y\in G$,  $^xy=xyx^{-1} $  denotes the left $x$-conjugate of $y$. Let  $[x,y]=xyx^{-1}y^{-1}$ denote the commutator of $x$ and $y$.  Sometimes the double commutator $[[x, y], z]$ will be denoted simply by $[x, y, z]$ and $$\big[[A,B],C\big]=[A,B,C].$$ Thus we write $[A_1,A_2,A_3,\dots,A_n]$ for $\big[\dots\big[[A_1,A_2],A_3\big],\dots,A_n\big]$ and call it the {\it standard form} of the multiple commutator formulas.

The following formulas will be used frequently (sometimes without giving a reference to them),
\begin{itemize}
\item[(C1)] $[x,yz]=[x,y]({}^y[x,z])$;
\smallskip
\smallskip
\item[(C$1^+$)] 
An easy induction, using identity (C1), shows that 
$$\big [x,\prod_{i=1}^k u_i]=\prod_{i=1}^k {}^{\prod_{j=1}^{i-1}u_j}[x,u_{i}],$$ where by convention $\prod_{j=1}^0 u_j=1$.
\item[(C2)] $[xy,z]=(^x[y,z])[x,z]$;
\smallskip
\item[(C$2^+$)] 
As in (C$1^+$), we have  
$$\big [\prod_{i=1}^k u_i,x\big]=\prod_{i=1}^k {}^{\prod_{j=1}^{k-i}u_j}[u_{k-i+1},x].$$ 
\smallskip
\item[(C3)] (the Hall-Witt identity): 
 ${}^{x}\big[[x^{-1},y],z\big]\, \, {}^{z}\big[[z^{-1},x],y\big]\, \, {}^{y}\big[[y^{-1},z],x\big]=1$;
\smallskip
\item[(C4)] $[x,^yz]=^y[^{y^{-1}}x,z]$;
\smallskip
\item[(C5)] $[^yx,z]=^{y}[x,^{y^{-1}}z]$.
\smallskip
\item[(C6)] If $H$ and $K$ are subgroups of $G$, then $[H,K]=[K,H]$. 
\end{itemize}

\subsection{} \label{elel} For any  associative  ring $A$, $\GL_n(A)$ denotes the general linear group of $A$,  and $E_n(A)$ denotes the elementary subgroup of $\GL_n(A)$. Let $I$ be any two-sided ideal of $A$. If $\rho_I $ denotes the natural ring homomorphism $A\to A/I$, then $\rho_I$ induces a  group homomorphism, denoted also by $\rho_I$, $\rho_I:\GL_n(A)\to \GL_n(A/I)$.   The congruence subgroup of level $I$ is defined as $\GL_n(A,I)=\ker(\rho_I: \GL_n(A)\to \GL_n(A/I))$. The {\it elementary subgroup of level} $I$ is, by definition, the subgroup generated by all elementary matrices $e_{i,j}(\alpha)$ with $\alpha\in I$. The normal closure of $E_n(I)$ in $E_n(A)$, the {\it relative elementary subgroup of level} $I$, is denoted by $E_n(A, I)$. We use $E_n^L(I)$ to denote the subset of $E_n(I)$, which can be represented as the product $L$ elementary matrices. $E_n^L(I)$ is not necessarily a group.

We have the following relations among elementary matrices which will be used in the paper:
\begin{itemize}
\item[(E1)] $e_{i,j}(a)e_{i,j}(b)=e_{i,j}(a+b).$
\item[(E2)] $[e_{i,j}(a),e_{k,l}(b)]=1$ if $i\not = l, j \not = k$.
\item[(E3)] $[e_{i,j}(a),e_{j,k}(b)]=e_{i,k}(ab)$ if $i \not = k$.
\end{itemize}

\subsection{}\Label{sub:1.3}
$\GL_n$ and $E_{n}$ define two functors from the category of associative rings to the category of groups. These functors commute with direct limits. In another words, let $A_i$ be an inductive system of rings, and $A=\varinjlim A_i$. Then 
$$\GL_n(A)=\GL_n(\varinjlim A_i)\cong\varinjlim \GL_n(A_i)\quad \text{and} \quad E_n(\varinjlim A_i)\cong\varinjlim E_n(A_i).$$ Also, if $J$ is an ideal of $A$, then there are ideals $J_i$ of $A_i$ such that $J=\varinjlim J_i$ and 
$$\GL_n(A,J) =  \GL_n(\varinjlim A_i,\varinjlim J_i)\cong\varinjlim \GL_n(A_i,J_i).$$
By Proposition~\ref{Prop:01} and the above observation, we may reduce some of our problems to the case of  the Noetherian rings.
Let $S$ be a multiplicative system in $R$, $R_s$ with $s\in S$ is an inductive system with respect to the localization map : $\theta_t: R_s\to R_{st}$. If $\mathcal F$ is a functor commuting with direct limits (here $\GL_n$ and $E_n$), then 
$$
\mathcal F(S^{-1}R)=\varinjlim F(R_s).
$$
This allows us to reduce our problems  in any localization to the localization in one element. Starting from Section~\ref{ghgh}, we will be working in the ring $A_t$. However, eventually we need to return to the ring $A$. The following Lemma provides a way to ``pull back'' elements from $\GL_n(A_t)$ to $\GL_n(A)$. 

\begin{Lem}{\cite[Lemma~4.10]{B4}}\Label{Lem:03}
Let $A$ be a module finite $R$-algebra, where $R$  is a commutative Noetherian ring. Then for any $t\in R$,  there exists a positive integer $l$ such that the homomorphism 
$\theta_t: \GL_n(A, t^l A)\longrightarrow \GL_n(A_t)$ is injective.
\end{Lem}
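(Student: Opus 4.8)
The plan is to identify the kernel of the localisation homomorphism $\theta_t: \GL_n(A)\to\GL_n(A_t)$ explicitly, and then to pick $l$ large enough that this kernel intersects $\GL_n(A,t^lA)$ trivially. Since $A$ is an $R$-algebra and $t\in R$, the element $t$ is central in $A$, so $A_t$ is the (central, hence Ore) localisation of $A$ at the powers of $t$; consequently an element $a\in A$ maps to $0$ in $A_t$ precisely when $t^ka=0$ for some $k\ge 1$. Thus the kernel of $\theta_t: A\to A_t$ is $\bigcup_{k\ge 1}\operatorname{Ann}_A(t^k)$, and each $\operatorname{Ann}_A(t^k)$ is a two-sided ideal of $A$ because $t$ is central.

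The crucial step is that this ascending chain stabilises. Indeed $R$ is Noetherian and $A$ is module finite over $R$, so $A$ is a Noetherian $R$-module; hence the ascending chain of $R$-submodules
\[
\operatorname{Ann}_A(t)\subseteq\operatorname{Ann}_A(t^2)\subseteq\operatorname{Ann}_A(t^3)\subseteq\cdots
\]
terminates. Fix $k_0$ with $\operatorname{Ann}_A(t^{k_0})=\operatorname{Ann}_A(t^{k_0+j})$ for all $j\ge 0$ and take $l=k_0$ (any $l\ge k_0$ works as well). Then $t^lA$ meets the kernel of $\theta_t: A\to A_t$ only in $0$: if $x=t^la$ with $t^{k_0}x=0$, then $t^{k_0+l}a=0$, so $a\in\operatorname{Ann}_A(t^{k_0+l})=\operatorname{Ann}_A(t^{k_0})$, whence $t^{k_0}a=0$ and therefore $x=t^{l-k_0}(t^{k_0}a)=0$.

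It remains to assemble these facts. Let $g\in\GL_n(A,t^lA)$ with $\theta_t(g)=1$. Write $g=1+c$ with $c\in M_n(t^lA)$; since $\theta_t$ acts entrywise, $\theta_t(g)=1$ forces every entry of $c$ into the kernel of $\theta_t: A\to A_t$, hence into $t^lA\cap\ker(\theta_t: A\to A_t)=0$ by the previous paragraph. Thus $c=0$ and $g=1$, so $\theta_t: \GL_n(A,t^lA)\to\GL_n(A_t)$ is injective. I do not expect a serious obstacle here; the only point requiring care is the noncommutativity of $A$ — one must use that $t$ is central, so that $A_t$ is an honest Ore localisation and the annihilators $\operatorname{Ann}_A(t^k)$ are two-sided ideals — after which the Noetherian hypothesis on $R$ does the real work through the stabilisation of the annihilator chain.
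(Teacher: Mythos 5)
Your proof is correct: identifying the kernel of $\theta_t\colon A\to A_t$ with $\bigcup_{k}\operatorname{Ann}_A(t^k)$, using that $A$ is a Noetherian $R$-module to stabilize this annihilator chain at some $t^{k_0}$, and taking $l=k_0$ so that $t^lA$ meets the kernel trivially (hence $\GL_n(A,t^lA)\cap\ker\theta_t=\{1\}$, read entrywise) is precisely the standard argument behind Bak's Lemma~4.10, which the paper cites without reproducing a proof. There is no gap; the one delicate point — that $t$ is central, so $A_t$ is a genuine central localisation and the $\operatorname{Ann}_A(t^k)$ are two-sided ideals — you handle explicitly.
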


\begin{defn}\label{nndef}
Let $A$ be an $R$-algebra, $I$ a two-sided ideal of $A$, $t\in R$, and $l$ a positive integer. Define 
$E_n(t^lA,t^lI)$ to be  a subgroup of $E_n(A,t^lI)$ generated by 
$$
{}^ee_{i,j}(t^l\alpha) \quad \text{for all}\quad \alpha\in I, e\in E_n(t^lA) \text{ and } 1\leq i\not = j\le n.
$$
Here by $t^lI$, we are considering the image of $t \in R$ in $A$ under the algebra structure homomorphism. It is clear that $t^lI$ is also an ideal of $A$. 

For any element $\alpha\in A$, we use $E_n(t^lA, t^l\alpha)$ to denote the subgroup generated by $$
{}^{e}e_{i,j}(t^l\alpha) \quad \text{for all}\quad  e\in E_n(t^lA) \text{ and } 1\le i,j\le n.
$$
\end{defn}

From the definition, it is clear that $E_n(t^lA,t^lI)$ is normalized by $E_n(t^lA)$.  This will be used throughout out the calculations. Also, by Lemma~\ref{Lem:03},  both $E_n(t^lA, t^lI)$ and $E_n(t^lA, t^l\alpha)$ are embedded in $\GL_n(A_t)$ for  a sufficiently large integer $l$. This fact will be used in Theorem~\ref{main}.

\subsection{}\label{fghdjs}

Finally we need the following elementary conjugation calculus, Lemmas~7, 8 and 11  from~\cite{RHZZ1}, respectively. 
Note that in Equations~\ref{lem5}, \ref{lem8} and~\ref{lem11} the calculations take place in the group $E_n(A_t)$.

\begin{Lem}[cf. ~\cite{RHZZ1}]\Label{LemHZ}
Let $A$ be a module finite $R$-algebra, $I,J$ two-sided ideals of $A$, $a,b,c\in A$ and  $t\in R$. If $m,l$ are given, there is an integer $p$ such that 
\begin{equation}\label{lem5}
  {}^{ E_n^1(\frac{c}{t^m})}E_n( t^pA,t^{ p}\langle  a\rangle)\subseteq E_n( t^lA,t^{ l}\langle  a\rangle),
\end{equation}
there is an integer $p$ such that 
\begin{equation}\label{lem8}
  {}^{ E_n^1(\frac{c}{t^m})}\big[E_n( t^pA,t^{ p}\langle  a\rangle), E_n(t^pA,t^{ p}\langle  b\rangle)\big]\subseteq \big[E_n( t^lA,t^{ l}\langle  a\rangle), E_n(t^lA,t^{ l}\langle  b\rangle)\big],
\end{equation}
and there is an integer $p$ such that 
\begin{equation}\label{lem11}
\Big[E_n(t^p A, t^{ p}  I), E^1_n\big(\frac{J}{t^m}\big)\Big]\subseteq \big[E_n(t^l A, t^{ l}  I), E_n(t^lA,t^{ l} J)\big].
\end{equation}
\end{Lem}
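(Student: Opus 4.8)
These three containments are \cite[Lemmas~7,~8 and~11]{RHZZ1}; the plan is to prove all of them by the same mechanism. Everything will take place in $E_n(A_t)$, and the two structural facts I would set up first are: (i) each of $E_n(t^lA,t^l\langle a\rangle)$, $E_n(t^lA,t^lI)$, $E_n(t^lA,t^lJ)$ is normalized by $E_n(t^lA)$, and for $p\ge 2l$ one has $E_n(t^pA,t^p\langle a\rangle)\subseteq E_n(t^lA,t^l\langle a\rangle)$ and $E_n(t^pA,t^pI)\subseteq E_n(t^lA,t^lI)\subseteq E_n(t^lA)$; (ii) for $n\ge 3$, $r\notin\{i,j\}$, $N\ge 0$ and $s\in A$, the identity
\[
e_{ij}(t^{2l}sa^N)={}^{e_{ir}(t^ls)}e_{rj}(t^la^N)\cdot e_{rj}(-t^la^N)
\]
and its left--right mirror show that $E_n(t^lA,t^l\langle a\rangle)$ contains $e_{ij}(\delta)$ for every $\delta\in t^{2l}Aa^N+t^{2l}a^NA$ (for the honest ideals $I,J$ the analogous statement is immediate since $t^{2l}sI,t^{2l}Is\subseteq t^lI$). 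The engine throughout is the pair of identities $^{x}y=[x,y]y$ and $^{x}[y,z]=[{}^{x}y,{}^{x}z]$, together with (E1)--(E3) and (C$1^+$)/(C$2^+$).

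For (\ref{lem5}) I would start from a generator $^{\epsilon e}e_{ij}(t^pa^N)$ of the left side, with $\epsilon=e_{kl}(c/t^m)\in E_n^1(\frac{c}{t^m})$ and $e\in E_n(t^pA)$, write $\epsilon e=({}^{\epsilon}e)\,\epsilon$ so that $^{\epsilon e}e_{ij}(t^pa^N)={}^{({}^{\epsilon}e)}\bigl({}^{\epsilon}e_{ij}(t^pa^N)\bigr)$, and then expand $^{\epsilon}e_{ij}(t^pa^N)=[\epsilon,e_{ij}(t^pa^N)]\,e_{ij}(t^pa^N)$ with (E1)--(E3). Generically the commutator is $1$, or $e_{uv}(\pm t^{p-m}ca^N)$, or $e_{uv}(\pm t^{p-m}a^Nc)$; and when $\epsilon$ and $e_{ij}$ occupy transposed off-diagonal positions it is first broken up by a further use of (E3), which is possible because $n\ge 3$. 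In every case, once $p\ge m+2l$, fact (ii) places both factors in $E_n(t^lA,t^l\langle a\rangle)$. Using (C$1^+$) and (E1)--(E3) again, the same bound forces $^{\epsilon}e\in E_n(t^lA)$, so conjugation by it is harmless by (i). As the bound $p\ge m+2l$ does not depend on $e,N,i,j$, a single $p$ works, and since conjugation distributes over products, this settles (\ref{lem5}).

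Statement (\ref{lem8}) I would then deduce at once: conjugation is an automorphism, so $^{\epsilon}[x,y]=[{}^{\epsilon}x,{}^{\epsilon}y]$, and applying (\ref{lem5}) to each factor (with $b$ in the second) and taking the larger $p$ gives the claim. For (\ref{lem11}) I would reduce, via (C$2^+$) and the inclusion $E_n(t^pA,t^pI)\subseteq E_n(t^lA)$ (which normalizes the right-hand side), to a single generator $[{}^e e_{ij}(t^p\alpha),e_{kl}(\beta/t^m)]$ with $e\in E_n(t^pA)$, $\alpha\in I$, $\beta\in J$; peel off $e$ with (C5) to get $^{e}[e_{ij}(t^p\alpha),{}^{e^{-1}}e_{kl}(\beta/t^m)]$; rewrite $^{e^{-1}}e_{kl}(\beta/t^m)=c_0\,e_{kl}(\beta/t^m)$ with $c_0\in E_n(t^lA,t^lJ)$ for $p\ge m+l$ (the estimate from (\ref{lem5})); split the commutator with (C1) into $[e_{ij}(t^p\alpha),c_0]\in[E_n(t^lA,t^lI),E_n(t^lA,t^lJ)]$ times $^{c_0}[e_{ij}(t^p\alpha),e_{kl}(\beta/t^m)]$; and for the bare commutator observe that it is $1$ unless the indices interact, equals $e_{uv}(\pm t^{p-m}\alpha\beta)$ or $e_{uv}(\pm t^{p-m}\beta\alpha)$ when they chain --- which for $p\ge m+2l$ and $r\notin\{u,v\}$ is $[e_{ur}(t^{p-m-l}\alpha),e_{rv}(t^l\beta)]\in[E_n(t^lA,t^lI),E_n(t^lA,t^lJ)]$ because $t^{p-m-l}\alpha\in t^lI$ --- and is handled in the transposed-index cases by an extra (E3) (using $n\ge 3$) and the Hall--Witt identity (C3); the conjugations by $c_0$ and $e$, both in $E_n(t^lA)$, preserve the right-hand side, and (C6) takes care of the $\beta\alpha$ terms.

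The hard part is entirely organizational and lives in (\ref{lem11}): I expect the main obstacle to be verifying that \emph{every} stray factor produced while peeling off the conjugator $e$ and redistributing commutators via (C1)--(C5) and (C3) either collapses to $1$ or carries enough powers of $t$ in its numerator to be reabsorbed into $E_n(t^lA,t^lI)$, $E_n(t^lA,t^lJ)$ or $E_n(t^lA)$ --- equivalently, that the denominator budget $t^{-m}$ is dominated, uniformly over all generators, by the numerator credit $t^p$ once $p$ passes an explicit threshold of the form $m+cl$. It is this uniform, index-by-index bookkeeping, rather than any single identity, that is the genuine content of the ``yoga of commutators''.
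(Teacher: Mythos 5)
The paper does not actually prove this lemma---it is imported verbatim from \cite{RHZZ1} (Lemmas~7, 8 and 11 there)---and your reconstruction follows essentially the same conjugation-calculus route as that source: analyse the conjugation of a single elementary generator by $E_n^1(\frac{c}{t^m})$, use $n\ge 3$ to split an elementary matrix in the transposed-index cases, absorb surplus powers of $t$ via your identity (ii), and exploit that $E_n(t^lA)$ normalizes $E_n(t^lA,t^lI)$ so that the threshold on $p$ is uniform in the length of the conjugating word. Your explicit bounds (e.g.\ $p\ge m+2l$) are somewhat too optimistic in the transposed-index cases, but since all that is needed is some threshold depending only on $m$ and $l$, this does not affect the structure of the argument.
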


By   Lemma~\ref{LemHZ}, one obtains the following result easily. The proof is left to the reader.
\begin{Lem}\Label{LemHZ1}
Let $A$ be a module finite $R$-algebra, $I,J$ two-sided ideals of $A$, $a,b,c\in A$ and  $t\in R$. If $m,l, L$ are given, there is an integer $p$ such that 
\begin{equation}\label{lemnew}
\Big[E_n(t^p A, t^{ p}  I), {}^{E_n^L\big(\frac{A}{t^m}\big)}E^1_n\big(\frac{J}{t^m}\big)\Big]\subseteq \big[E_n(t^l A, t^{ l}  I), E_n(t^lA,t^{ l} J)\big].
\end{equation}
\end{Lem}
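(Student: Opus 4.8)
The plan is to reduce Lemma~\ref{LemHZ1} to the inclusion~(\ref{lem11}) of Lemma~\ref{LemHZ} by pushing the conjugating factor ${}^{E_n^L(A/t^m)}$ through the commutator bracket and absorbing it into a controlled change of the level. First I would set up an induction on the length $L$ of the conjugating word: the base case $L=0$ is precisely~(\ref{lem11}), so assume the claim holds for words of length $L-1$ and write a conjugator as $e={}^{e'}(\cdot)$ where $e'\in E_n^{L-1}(A/t^m)$ acts on an element of the shape ${}^{E_n^1(A/t^m)}E_n^1(J/t^m)$. The point is that $E_n^{L}(A/t^m)$ lives inside $\varinjlim E_n(A_{t^k})$ (cf.~\S\ref{sub:1.3}), so each generator is an elementary matrix with entries of the form $c/t^{m'}$ for a uniformly bounded $m'$, and conjugation is an algebra automorphism of $E_n(A_t)$; hence ${}^{e}\big(\text{commutator}\big)$ is again a commutator of the same syntactic form, just with the inner levels raised.

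The key steps, in order, would be: (i) observe that ${}^{E_n^L(A/t^m)}E_n^1(J/t^m)$ is contained in ${}^{E_n^1(A/t^{m_1})}E_n^1(J/t^{m_1})$-type data after finitely many applications of~(\ref{lem5}) — more precisely, use~(\ref{lem5}) repeatedly to see that conjugating $E_n(t^pA,t^p\langle a\rangle)$ by one factor of $E_n^1(A/t^m)$ lands in $E_n(t^{p'}A,t^{p'}\langle a\rangle)$ for a new exponent $p'$, and iterate $L$ times; (ii) apply~(\ref{lem11}) to the resulting configuration $\big[E_n(t^{p}A,t^pI),\,E_n^1(J/t^{m'})\big]$ with the adjusted parameters to land in $\big[E_n(t^lA,t^lI),E_n(t^lA,t^lJ)\big]$; and (iii) track the quantifiers carefully so that, given $m,l,L$, the finitely many invocations of Lemma~\ref{LemHZ} produce a single integer $p$ that works. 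The bookkeeping here is purely a matter of composing the ``there exists $p$'' statements from the base lemma in the right order — start from the target level $l$, pull back through~(\ref{lem11}) to get an intermediate $p_0$, then pull back through $L$ successive applications of~(\ref{lem5}) (or its bracketed analogue) to arrive at the required $p$.

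An alternative, slightly cleaner route is to use the commutator identity (C4), ${}^{y}[^{y^{-1}}x,z]=[x,{}^yz]$, to rewrite $\big[E_n(t^pA,t^pI),\,{}^{e}E_n^1(J/t^m)\big]$ as ${}^{e}\big[{}^{e^{-1}}E_n(t^pA,t^pI),\,E_n^1(J/t^m)\big]$; then one handles the conjugation ${}^{e^{-1}}E_n(t^pA,t^pI)$ by~(\ref{lem5}) (applied $L$ times to absorb $e^{-1}\in E_n^L(A/t^m)$, replacing the level $p$ by some $p'$ with $E_n(t^{p'}A,t^{p'}I)\subseteq {}^{e^{-1}}E_n(t^pA,t^pI)$ after re-choosing $p$), next applies~(\ref{lem11}) to the resulting bracket, and finally observes that the outer conjugation by $e$ is harmless because the relative elementary groups on the right of~(\ref{lem11}) are $E_n(t^lA)$-invariant — up to one more adjustment of $l$, again via~(\ref{lem5})–(\ref{lem8}). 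This is essentially the scheme already used in~\cite{RHZZ1}, which is why the authors say ``the proof is left to the reader.''

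The main obstacle is not any single estimate but the simultaneous control of all the exponents: each lemma in~(\ref{lem5})–(\ref{lem11}) only promises \emph{some} integer $p$ depending on the \emph{given} data $m,l$, so one must be disciplined about the direction of the implications (work backwards from the conclusion) and make sure the conjugation by a length-$L$ word, which a priori could blow up the denominators by a factor of $t^{Lm}$, is still absorbed by choosing $p$ large enough — this is exactly where the hypothesis that $L$ is \emph{given} (hence the word length is bounded) is used. Once the order of quantifier-elimination is fixed, no genuinely new computation is required beyond iterating Lemma~\ref{LemHZ}.
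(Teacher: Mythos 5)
Your second route is exactly the intended argument (the paper leaves the proof to the reader as an easy consequence of Lemma~\ref{LemHZ}): move the conjugator $e\in E_n^L\big(\frac{A}{t^m}\big)$ across the bracket with (C4), tame ${}^{e^{-1}}E_n(t^pA,t^pI)$ by $L$ applications of (\ref{lem5}), apply (\ref{lem11}) to the resulting bracket, and remove the outer conjugation by $L$ applications of (\ref{lem8}), choosing the levels backwards from $l$ — and your stress on $L$ being fixed is the right point. Two small corrections: the containment you need from (\ref{lem5}) is ${}^{e^{-1}}E_n(t^pA,t^pI)\subseteq E_n(t^{p'}A,t^{p'}I)$ (you stated it the other way round), and the final conjugation by $e$ is not harmless by $E_n(t^lA)$-invariance, since $e$ has denominators — it is precisely (\ref{lem8}) that disposes of it; likewise step (i) of your first route (that ${}^{E_n^L(\frac{A}{t^m})}E_n^1\big(\frac{J}{t^m}\big)$ reduces to ${}^{E_n^1}E_n^1$-type data) is not justified, but the second route makes it unnecessary.
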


\section{Commutator subgroups}\label{ghgh}
In this section we study the relations between multiple commutator subgroups over  a quasi-finite algebra. The proofs are heavily depend on the computation in \cite{RHZZ1} (see Lemma~\ref{LemHZ}).
Throughout the section ideals are two sided and we assume $n\geq  3$ for any general linear group $\GL_n$. 

We record the following well-known lemma originally established by Suslin and Vaserstein (cf. \cite[Lemma 4.8]{B4}) which is needed in computations. 
\begin{Lem}\label{Engenerator}
Let $A$ be a ring and $I$ a two-ideal of $A$. Then $E_n(A,I)$ is generated as a group by the elements $${}^{e_{i,j}(a)}e_{j,i}(\alpha),$$
where $i\ne j$, $a\in A$ and $\alpha\in I$.
\end{Lem}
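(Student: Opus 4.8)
The plan is a double-inclusion argument. Write $H$ for the subgroup of $E_n(A)$ generated by the elements ${}^{e_{i,j}(a)}e_{j,i}(\alpha)$ with $i\neq j$, $a\in A$ and $\alpha\in I$. Each such element is an $E_n(A)$-conjugate of the elementary matrix $e_{j,i}(\alpha)$ of level $I$, hence lies in the normal closure $E_n(A,I)$ of $E_n(I)$ in $E_n(A)$ (see \S\ref{elel}); thus $H\subseteq E_n(A,I)$. For the reverse inclusion, setting $a=0$ shows $e_{j,i}(\alpha)\in H$ for all $i\neq j$ and $\alpha\in I$, so $E_n(I)\subseteq H$; since $E_n(A,I)$ is by definition the normal closure of $E_n(I)$ in $E_n(A)$, it now suffices to prove that $H$ is a normal subgroup of $E_n(A)$. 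Then $E_n(A,I)\subseteq H$, and combined with the first inclusion this yields $E_n(A,I)=H$.

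As $E_n(A)$ is generated by the elementary matrices $e_{k,l}(b)$, normality of $H$ follows once we verify that ${}^{e_{k,l}(b)}\big({}^{e_{i,j}(a)}e_{j,i}(\alpha)\big)\in H$ for all admissible indices and all $a,b\in A$, $\alpha\in I$ (both signs of $b$ being covered, as $b$ ranges over $A$). Rewrite the left-hand side as ${}^{e_{k,l}(b)e_{i,j}(a)}e_{j,i}(\alpha)$; moving $e_{k,l}(b)$ past $e_{i,j}(a)$ by means of the commutator identities of \S\ref{sub:1.4} together with the Steinberg relations (E1)--(E3) leaves a finite case analysis organised according to which of the indices $i,j,k,l$ coincide. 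When $e_{k,l}(b)$ commutes with both $e_{i,j}(a)$ and $e_{j,i}(\alpha)$ the conjugate is unchanged; in the ``near-diagonal'' cases (E3) produces an additional factor $e_{p,q}(\gamma)$ with $\gamma\in I$, and one checks directly that the resulting factors are again products of the allowed generators — using that every elementary matrix of level $I$ lies in $H$ and that a ``diagonal'' commutator $[e_{p,q}(c),e_{q,p}(\gamma)]$ with $\gamma\in I$ equals $\big({}^{e_{p,q}(c)}e_{q,p}(\gamma)\big)\, e_{q,p}(-\gamma)$, a product of two generators. The genuinely awkward case is the ``crossing'' one, where $e_{k,l}(b)=e_{j,i}(b)$: here moving $e_{j,i}(b)$ past $e_{i,j}(a)$ produces a non-elementary factor, and one invokes $n\geq 3$ to choose an index $m\notin\{i,j\}$, writes $e_{j,i}(\alpha)=[e_{j,m}(\alpha),e_{m,i}(1)]$, and pushes the conjugation through the commutator using (C3)--(C5); since $m$ is fresh, ${}^{g}e_{j,m}(\alpha)$ and ${}^{g}e_{m,i}(1)$ (with $g=e_{j,i}(b)e_{i,j}(a)$) collapse to short products supported in column $m$, respectively row $m$, and expanding the commutator again yields a product of generators of $H$.

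The substance of the proof lies entirely in this last verification: one must track every intermediate term — the mixed factors $e_{p,i}(b\alpha)$, $e_{j,q}(\alpha b)$ and the diagonal commutators arising in the crossing case — and confirm that each rewrites as a product of elements ${}^{e_{p,q}(c)}e_{q,p}(\gamma)$, so that nothing escapes $H$. This is purely mechanical but is where all the bookkeeping sits; for the details the reader is referred to \cite[Lemma~4.8]{B4}. Once the handful of index patterns is exhausted one has $H\trianglelefteq E_n(A)$, hence $E_n(A,I)\subseteq H$, completing the proof. Equivalently, the same computation may be organised as an induction on the length of a word representing $g$ in the expression ${}^{g}e_{j,i}(\alpha)$, the inductive step being precisely the assertion that conjugation by a single $e_{k,l}(b)$ carries a product of generators of $H$ back into $H$.
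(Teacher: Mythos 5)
The paper never proves Lemma~\ref{Engenerator}: it records it as a well-known result of Suslin and Vaserstein and simply cites \cite[Lemma~4.8]{B4}, so there is no in-paper argument to measure you against. Your outline is the standard proof of that quoted result, and its skeleton is sound: with $H$ the subgroup generated by the elements ${}^{e_{i,j}(a)}e_{j,i}(\alpha)$, the inclusion $H\subseteq E_n(A,I)$ is immediate, taking $a=0$ gives $E_n(I)\subseteq H$, and since $E_n(A,I)$ is by definition the normal closure of $E_n(I)$ in $E_n(A)$, everything reduces to checking that conjugation by a single elementary generator $e_{k,l}(b)$ sends each generator of $H$ back into $H$. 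Your identification of the crossing configuration as the only delicate case, and the device of choosing a fresh index $m$ (here the section-wide hypothesis $n\ge 3$ is genuinely needed) and writing $e_{j,i}(\alpha)=[e_{j,m}(\alpha),e_{m,i}(1)]$, is exactly the classical Vaserstein--Suslin trick; your claims that ${}^{g}e_{j,m}(\alpha)$ and ${}^{g}e_{m,i}(1)$ collapse to short products supported in column $m$ and row $m$ respectively do check out against (E1)--(E3).

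The caveat is that you do not actually carry out the decisive verification. After expanding $[{}^{g}e_{j,m}(\alpha),{}^{g}e_{m,i}(1)]$ via (C$1^+$)/(C$2^+$) one is left with conjugates of level-$I$ elementary matrices by \emph{products} of several elementary matrices, not by a single one, and converting these into products of the stated generators is precisely the induction on word length that you only gesture at in your closing sentence (or an equivalent further case analysis); you defer that bookkeeping to \cite[Lemma~4.8]{B4}. So, read as a self-contained proof, your text is an outline whose computational core is outsourced to the same citation the paper uses. Since the paper itself treats the lemma as a quotable known fact, this matches --- indeed exceeds --- the paper's own level of detail, but it should be acknowledged as a reduction-plus-citation rather than a complete argument.
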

Using Lemma~\ref{Engenerator} it is not hard to prove that $E_n(A, I^2)\subseteq E_n(I)$ (see~\cite[Corollary~4.9]{B4}  and~\cite[Proposition~2]{TITS}).
This containment can be slightly generalized to the case of two ideals. The following Lemma will be used throughout our calculations. 
\begin{Lem}\label{Lem:Habdank}
Let $A$ be a ring and $I, J$ be two-ideals of $A$. Then 
$$
E_n{(A, IJ+JI)}\subseteq \big[E_n(I),E_n(J) \big] \subseteq \big[E_n(A,I),E_n(A,J)\big] \subseteq \GL_n(A,IJ+JI).
$$
\end{Lem}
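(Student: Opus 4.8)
The statement is a chain of three inclusions, and the plan is to establish each one separately, using Lemma~\ref{Engenerator} for the first and standard commutator calculus together with the relations (E1)--(E3) for the remaining two.

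For the first inclusion $E_n(A,IJ+JI)\subseteq [E_n(I),E_n(J)]$, the plan is to use Lemma~\ref{Engenerator}, which tells us that $E_n(A,IJ+JI)$ is generated by elements of the form ${}^{e_{i,j}(a)}e_{j,i}(\gamma)$ with $\gamma\in IJ+JI$. Writing $\gamma$ as a sum of terms $xy$ and $yx$ with $x\in I$, $y\in J$, one reduces (using (E1) and the fact that $[E_n(I),E_n(J)]$ is a normal subgroup of $E_n(A)$ — which itself needs a brief argument, or one can simply work with generators of the relative group) to treating a single $e_{j,i}(xy)$. Since $n\ge 3$, pick an index $k$ distinct from $i,j$ and use (E3) to write $e_{j,i}(xy)=[e_{j,k}(x),e_{k,i}(y)]\in[E_n(I),E_n(J)]$; a symmetric argument handles $e_{j,i}(yx)$. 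The conjugation by $e_{i,j}(a)\in E_n(A)$ is then absorbed once one knows $[E_n(I),E_n(J)]$ is normalized by $E_n(A)$, which follows from (C4), (C5) and the fact that $E_n(I)$, $E_n(J)$ are each normalized by $E_n(A)$ up to passing to the relative subgroups; this is exactly the kind of bookkeeping that makes this step the main obstacle.

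The second inclusion $[E_n(I),E_n(J)]\subseteq[E_n(A,I),E_n(A,J)]$ is immediate from the containments $E_n(I)\subseteq E_n(A,I)$ and $E_n(J)\subseteq E_n(A,J)$ together with monotonicity of the commutator subgroup in each variable. For the third inclusion $[E_n(A,I),E_n(A,J)]\subseteq\GL_n(A,IJ+JI)$, the plan is to reduce modulo the ideal $IJ+JI$: applying the ring homomorphism $\rho=\rho_{IJ+JI}\colon A\to A/(IJ+JI)$, the images $\rho(E_n(A,I))$ and $\rho(E_n(A,J))$ lie in $E_n(A/(IJ+JI),\bar I)$ and $E_n(A/(IJ+JI),\bar J)$ respectively, where $\bar I,\bar J$ are the images of $I,J$; since $\bar I\bar J=\bar J\bar I=0$ in the quotient, any commutator $[e_{i,j}(\bar x),e_{k,l}(\bar y)]$ with $\bar x\in\bar I$, $\bar y\in\bar J$ is trivial by (E2)--(E3) (every product $\bar x\bar y$ or $\bar y\bar x$ vanishes), so the whole commutator subgroup dies in the quotient. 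Hence $[E_n(A,I),E_n(A,J)]\subseteq\ker(\rho_{IJ+JI})=\GL_n(A,IJ+JI)$.

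The step I expect to require the most care is the first inclusion, specifically verifying that $[E_n(I),E_n(J)]$ — or, more safely, $[E_n(A,I),E_n(A,J)]$, into which it embeds — is stable under $E_n(A)$-conjugation so that the generators produced by Lemma~\ref{Engenerator} can be reduced to the non-conjugated elementary generators $e_{j,i}(xy)$, $e_{j,i}(yx)$; here one invokes (C4)/(C5) and the normality of the relative elementary subgroups in $E_n(A)$. Everything else is a direct application of (E1)--(E3) and the definition of the congruence subgroup as a kernel.
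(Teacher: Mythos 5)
Your skeleton (Lemma~\ref{Engenerator}, the decomposition $e_{j,i}(xy)=[e_{j,k}(x),e_{k,i}(y)]$ with $k\ne i,j$, and reduction modulo $IJ+JI$) is the intended routine argument, and the middle inclusion is indeed immediate. But the first inclusion --- the one you correctly single out as the delicate step --- is not established by what you propose. The group $E_n(I)$ is \emph{not} normalized by $E_n(A)$; its normal closure is by definition $E_n(A,I)$, so (C4)/(C5) only give ${}^{g}\big[E_n(I),E_n(J)\big]\subseteq\big[E_n(A,I),E_n(A,J)\big]$ for $g\in E_n(A)$, not stability of $\big[E_n(I),E_n(J)\big]$ itself. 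Consequently your ``more safely'' fallback proves only $E_n(A,IJ+JI)\subseteq\big[E_n(A,I),E_n(A,J)\big]$, which is strictly weaker than the asserted first inclusion, and it is precisely the stronger form with the absolute groups $E_n(I)$, $E_n(J)$ that the paper uses later (e.g.\ in the proof of Lemma~\ref{Lem:08}). The repair is to conjugate the decomposition \emph{factorwise} rather than trying to absorb the conjugation into the commutator subgroup: for a generator ${}^{e_{i,j}(a)}e_{j,i}(xy)$ with $x\in I$, $y\in J$, one computes by (E3) that ${}^{e_{i,j}(a)}e_{j,k}(x)=e_{i,k}(ax)\,e_{j,k}(x)\in E_n(I)$ and ${}^{e_{i,j}(a)}e_{k,i}(y)=e_{k,j}(-ya)\,e_{k,i}(y)\in E_n(J)$, so the conjugated generator is literally a commutator of an element of $E_n(I)$ with an element of $E_n(J)$; the term $yx$ is handled symmetrically (using (C6)), and sums in $IJ+JI$ via (E1). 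This explicit computation is the missing idea.

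A smaller point on the third inclusion: after passing to $\bar A=A/(IJ+JI)$, the groups $E_n(\bar A,\bar I)$, $E_n(\bar A,\bar J)$ are generated by \emph{conjugates} ${}^{e}e_{i,j}(\bar x)$, not by plain elementary matrices, so checking $[e_{i,j}(\bar x),e_{k,l}(\bar y)]=1$ via (E2)--(E3) does not by itself kill the commutator subgroup. The clean fix is to work with the congruence subgroups: since $\bar I\bar J=\bar J\bar I=0$, any $1+\alpha\in\GL_n(\bar A,\bar I)$ and $1+\beta\in\GL_n(\bar A,\bar J)$ satisfy $(1+\alpha)(1+\beta)=1+\alpha+\beta=(1+\beta)(1+\alpha)$, so the two congruence subgroups commute elementwise and everything in $\big[E_n(A,I),E_n(A,J)\big]$ dies in the quotient; equivalently, the matrix computation of Lemma~\ref{Lem:New1} with $t=1$ gives $\big[\GL_n(A,I),\GL_n(A,J)\big]\subseteq\GL_n(A,IJ+JI)$ directly.
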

\begin{proof}
The proof is routine by using Lemma~\ref{Engenerator} and is left to the reader.
\end{proof}
\begin{Lem}\Label{Comgenerator}
Let $A$ be a ring and $I, J$ be two-ideals of $A$. Then $\big[E_n(A,I), E_n(A,J)\big]$ is generated as a group by the elements of the form 
\begin{equation}\label{yyttrree}
^{c}\big[e_{j,i}(\alpha),{}^{e_{i,j}(a)}e_{j,i}(\beta)\big],\quad {}^c\big[e_{j,i}(\alpha),e_{i,j}(\beta)\big], \quad   {}^c e_{i,j}(\alpha\beta), \quad \text{ and} \quad {}^c e_{i,j}(\beta\alpha),
\end{equation}
where $1\leq i\ne j\leq n$, $\alpha\in I$, $\beta\in J$, $a \in A$ and $c\in E_n(A)$.
\end{Lem}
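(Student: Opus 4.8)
The plan is to start from the known generating set of $\big[E_n(A,I), E_n(A,J)\big]$ that comes from combining Lemma~\ref{Engenerator} with the commutator identities (C1)--(C6). By Lemma~\ref{Engenerator}, $E_n(A,I)$ is generated by elements $g = {}^{e_{i,j}(a)}e_{j,i}(\alpha)$ with $\alpha \in I$, and $E_n(A,J)$ by elements $h = {}^{e_{k,l}(b)}e_{l,k}(\beta)$ with $\beta \in J$. First I would observe that the commutator subgroup $\big[E_n(A,I), E_n(A,J)\big]$ is generated as a group by the set $\big\{ {}^c[g,h] : g,h \text{ as above}, c \in E_n(A) \big\}$; this is the standard fact that $[H,K]$ is generated by conjugates (by the group generated by $H$ and $K$, hence a fortiori by all of $E_n(A)$) of commutators of generators, using (C1), (C2) to expand commutators of products and $[H,K] = [K,H]$ to symmetrize. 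So it suffices to rewrite each $[g,h]$, up to $E_n(A)$-conjugation and up to products of elements already in the proposed generating set~\eqref{yyttrree}, into the four advertised shapes.

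The heart of the argument is therefore a case analysis on the index pairs $(i,j)$ appearing in $g = {}^{e_{i,j}(a)}e_{j,i}(\alpha)$ and $(k,l)$ in $h = {}^{e_{k,l}(b)}e_{l,k}(\beta)$. Using (C4) and (C5) one can pull the conjugating elementaries $e_{i,j}(a)$, $e_{k,l}(b)$ outside the commutator at the cost of changing the outer conjugator $c$ (which stays in $E_n(A)$) and conjugating the inner arguments; the point is that after this manipulation $[g,h]$ becomes an $E_n(A)$-conjugate of a product of terms of the form ${}^{d}[e_{j,i}(\alpha'), e_{l,k}(\beta')]$ with $d \in E_n(A)$, $\alpha' \in I$, $\beta' \in J$, where $\alpha'$, $\beta'$ are the original $\alpha$, $\beta$ possibly multiplied by ring elements coming from $a$ or $b$ (still in $I$, resp.\ $J$, since these are two-sided ideals). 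Now for such a commutator $[e_{j,i}(\alpha'), e_{l,k}(\beta')]$ one invokes the elementary relations (E2) and (E3): if the index patterns are ``disjoint'' it is trivial; if they share exactly one index in the right position it collapses, by (E3), to a single $e_{\bullet,\bullet}(\alpha'\beta')$ or $e_{\bullet,\bullet}(\beta'\alpha')$, giving the third and fourth generators in~\eqref{yyttrree}; and the genuinely nontrivial residual case --- where the two elementaries sit in transposed positions --- is exactly ${}^c[e_{j,i}(\alpha),{}^{e_{i,j}(a)}e_{j,i}(\beta)]$ or ${}^c[e_{j,i}(\alpha),e_{i,j}(\beta)]$, the first two generators, after undoing part of the pull-out of $e_{i,j}(a)$. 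One must also check, using the hypothesis $n \ge 3$, that whenever we need a ``third index'' to split an elementary via (E3) such an index is available.

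The step I expect to be the main obstacle is the bookkeeping in the transposed-index case, where the commutator does not simply collapse and one must track how the conjugation by $e_{i,j}(a)$ interacts with $e_{j,i}(\beta)$: here the identity (C1) forces $[e_{j,i}(\alpha), {}^{e_{i,j}(a)}e_{j,i}(\beta)]$ to split into a piece of the desired form times a commutator with an $e_{i,j}$-type term, and one must iterate, controlling that each newly produced factor is again one of the four listed shapes (after further applications of (E3), which is where ${}^c e_{i,j}(\alpha\beta)$ and ${}^c e_{i,j}(\beta\alpha)$ enter) and that the process terminates. The argument is elementary but combinatorially fiddly, so in the write-up I would set up notation once for ``pulling a conjugator out of a commutator and absorbing it into $c$,'' dispose of the disjoint and single-shared-index cases quickly via (E2)--(E3), and then treat the transposed case carefully, reducing to the four generators in~\eqref{yyttrree}; finally, symmetry under swapping the roles of $I$ and $J$ (via (C6)) handles the remaining index configurations.
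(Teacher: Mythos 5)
Your opening reduction (the commutator subgroup is generated by $E_n(A)$-conjugates of commutators of the Suslin--Vaserstein generators from Lemma~\ref{Engenerator}, via (C1$^+$)/(C2$^+$) and normality) agrees with the paper. The gap is in your middle step: you claim that, after pulling the conjugating elementaries out with (C4)/(C5), $[g,h]$ becomes an $E_n(A)$-conjugate of a product of commutators of \emph{bare} elementaries ${}^{d}\big[e_{j,i}(\alpha'),e_{l,k}(\beta')\big]$ with $\alpha'\in I$, $\beta'\in J$. That is not true. Pulling $e_{i',j'}(a)$ out of the first argument by (C5) turns the second argument into ${}^{e_{i',j'}(-a)e_{i,j}(b)}e_{j,i}(\beta)$, i.e.\ an elementary conjugated by a \emph{product of two} elementaries; and already a single transposed-position conjugate ${}^{e_{i,j}(b)}e_{j,i}(\beta)$ is not a product of elementary matrices with entries in $J$ (its diagonal entries are $1+b\beta$ and $1-\beta b$; if every such element lay in $E_n(J)$, Lemma~\ref{Engenerator} would give $E_n(A,J)=E_n(J)$, which fails in general). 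This is exactly the phenomenon the relative elementary subgroup is designed to capture, and it is why the first shape in~\eqref{yyttrree} retains a conjugation. Your later remark that the transposed case is recovered ``after undoing part of the pull-out of $e_{i,j}(a)$'' does not repair this: after the pull-out the inner conjugator has length two, so undoing one elementary does not land you in the four listed shapes, and your proposal gives no mechanism to stop conjugators from accumulating; the vague ``iterate and check termination'' step is where the argument would actually break down.

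The paper supplies precisely the missing mechanism. After moving the single conjugator $e_{i',j'}(a)$ across, it observes that ${}^{e_{i',j'}(-a)e_{i,j}(b)}e_{j,i}(\beta)$ still lies in $E_n(A,J)$ by normality, re-expands it by Lemma~\ref{Engenerator} as a product of \emph{singly} conjugated generators ${}^{e_{p,q}(c)}e_{q,p}(\beta')$, and applies (C1$^+$) once more; this reduces everything to commutators $\big[e_{i',j'}(\alpha),{}^{e_{i,j}(a)}e_{j,i}(\beta)\big]$ with exactly one conjugation on the $J$-side and none on the $I$-side. Those are then settled by a short direct index-by-index computation with (C1), (C2), (E2), (E3) --- for instance $\big[e_{j,j'}(\alpha),{}^{e_{i,j}(a)}e_{j,i}(\beta)\big]={}^{e_{i,j}(a)}e_{j,j'}(\beta a\alpha)$, which is of the fourth shape since $a\alpha\in I$ --- with no iteration or termination argument needed. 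If you replace your claimed reduction to bare--bare commutators by this normality-plus-re-expansion step, your subsequent case analysis goes through essentially as in the paper.
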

\begin{proof}
A typical generator of $\big[E_n(A,I), E_n(A,J)\big]$ is of the form 
$[e,f]$, where $e \in E_n(A,I)$ and $f \in E_n(A,J)$. Thanks to Lemma~\ref{Engenerator}, we may assume that $e$ and $f$ are products of elements the form 
$$
e_{i}=^{e_{p',q'}(a)} e_{q',p'}(\alpha)  \quad \text{and} \quad 
f_{j}=^{e_{p,q}(b)} e_{q,p}(\beta),
$$
where $a,b\in A$, $\alpha\in I $ and $\beta\in J$, respectively. Applying (C$1^+$) and then (C$2^+$), one gets that
$\big[E_n(A,I), E_n(A,J)\big]$ is generated by the elements of the form 
$$^{c}\big[^{e_{i',j'}(a)}e_{j',i'}(\alpha),{}^{e_{i,j}(b)}e_{j,i}(\beta)\big],$$
where $c\in E_n(A)$. Furthermore, 
$$
^c\big[^{e_{i',j'}(a)}e_{j',i'}(\alpha),{}^{e_{i,j}(b)}e_{j,i}(\beta)\big]=^{ce_{i',j'}(a)}\big[e_{j',i'}(\alpha),{}^{e_{i',j'}(-a)e_{i,j}(b)}e_{j,i}(\beta)\big].
$$
The normality of $E_n(A,J)$ implies that ${}^{e_{i',j'}(-a)e_{i,j}(b)}e_{j,i}(\beta)\in E_n(A,J)$, which is a product of  $^{e_{p,q}(a)}e_{q,p}(\beta)$, $a\in A$ and $\beta \in J$ by Lemma~\ref{Engenerator}. Again by (C$1^+$), one reduces the proof to the case 
of showing that
$$
\big[e_{i',j'}(\alpha),{}^{e_{i,j}(a)}e_{j,i}(\beta)\big]
$$
is a product of the generators listed in~(\ref{yyttrree}). We need to consider following cases:
\begin{itemize}
\item If $i'=j,j'=i$: Then there is nothing to proof.

\smallskip 

\item if $i'=j, j'\ne i$:
\begin{eqnarray*}
\big[e_{j,j'}(\alpha),{}^{e_{i,j}(a)}e_{j,i}(\beta)\big]&=&{}^{e_{i,j}(a)}\big[{}^{e_{i,j}(-a)}e_{j,j'}(\alpha ), e_{j,i}(\beta)\big]\\
&=&{}^{e_{i,j}(a)}\big[[{e_{i,j}(-a)},e_{j,j'}(\alpha )]e_{j,j'}(\alpha ), e_{j,i}(\beta)\big]\\
&=&{}^{e_{i,j}(a)}\big[e_{i,j'}(-a\alpha )e_{j,j'}(\alpha ), e_{j,i}(\beta )\big].
\end{eqnarray*}
Applying now (C2),
\begin{eqnarray*}
[e_{i,j'}(-a\alpha )e_{j,j'}(\alpha ), e_{j,i}(\beta )]&=&
\big({}^{e_{i,j'}(-a\alpha )}[e_{j,j'}(\alpha ), e_{j,i}(\beta )]\big)[e_{i,j'}(-a\alpha ), e_{j,i}(\beta )]\\
&=&[e_{i,j'}(-a\alpha ), e_{j,i}(\beta )]\\
&=&[ e_{j,i}(\beta ),e_{i,j'}(-a\alpha )]^{-1}\\
&=&e_{j,j'}(-\beta a \alpha)^{-1}\\
&=&e_{j,j'}(\beta a \alpha)
\end{eqnarray*}
Thus 
$$
\big[e_{j,j'}(\alpha),{}^{e_{i,j}(a)}e_{j,i}(\beta)\big]={}^{e_{i,j}(a)}e_{j,j'}(\beta a \alpha)
$$
which satisfies the lemma.
\item if $i'\ne j, j'= i$: The argument is similar to the previous case.

\smallskip 

\item if $i'\not =j, j' \not = i$: We consider four cases: 
\begin{itemize} 

\smallskip 

\item if $i'=i,j'=j$: 
$$
\big[e_{i,j}(\alpha),{}^{e_{i,j}(a)}e_{j,i}(\beta)\big]={}^{e_{i,j}(a)}\big[e_{i,j}(\alpha),e_{j,i}(\beta)\big].
$$
\item if $i'=i,j'\ne j$:
\begin{eqnarray*}
\big[e_{i,j'}(\alpha),{}^{e_{i,j}(a)}e_{j,i}(\beta)\big]&=&{}^{e_{i,j}(a)}\big[e_{i,j'}(\alpha),e_{j,i}(\beta)\big]\\
&=&{}^{e_{i,j}(a)}e_{j,j'}(-\beta\alpha).
\end{eqnarray*}

\item if $i'\ne i,j'=j$:
 \begin{eqnarray*}
\big[e_{i',j}(\alpha),{}^{e_{i,j}(a)}e_{j,i}(\beta)\big]&=&{}^{e_{i,j}(a)}\big[e_{i',j}(\alpha),e_{j,i}(\beta)\big]\\
&=&{}^{e_{i,j}(a)}e_{i,i'}(\alpha\beta).
\end{eqnarray*}

\item if $i'\ne i, j'\ne j$: 
$$
\big[e_{i',j'}(\alpha),{}^{e_{i,j}(a)}e_{j,i}(\beta)\big]=1.
$$

\end{itemize}
\end{itemize}
This finishes the proof.
\end{proof}

\ifcomm{
\begin{proof}
Let 
$$
 \Bigl[e_{i',j'}(t^p\alpha), {}^{e_{i,j} (\frac{a}{t^m})}e_{i'',j''}(\frac{\beta}{t^m})\Bigr]\in[E^1_n(t^p I), ^{E_n^1(\frac{A}{t^m})}E^1_n(\frac{J}{t^m})]
$$
for  $\alpha\in I$, $\beta\in J$, $a\in A$ and some positive integers $i,i',i'',j,j',j''$. Then
\begin{eqnarray*}
&& \Bigl[e_{i',j'}(t^p\alpha), {}^{e_{i,j} (\frac{a}{t^m})}e_{i'',j''}(\frac{\beta}{t^m})\Bigr]\\
&=&\Bigl[e_{i',j'}(t^p\alpha), e_{i'',j''}(\frac{\beta}{t^m})[ e_{i'',j''}(-\frac{\beta}{t^m}),{e_{i,j} (\frac{a}{t^m})}]\Bigr]\\
&&\text{By identity (C1) }\\
&=&[e_{i',j'}(t^p\alpha), e_{i'',j''}(\frac{\beta}{t^m})]\Bigl( {}^{e_{i'',j''}(\frac{\beta}{t^m})}\Bigl[e_{i',j'}(t^p\alpha),[ e_{i'',j''}(-\frac{\beta}{t^m}),{e_{i,j} (\frac{a}{t^m})}]\Bigr]\Bigr)
\end{eqnarray*}
By Lemma~\ref{Lem:06}, we may choose a large enough $p$ such that 
$$[e_{i',j'}(t^p\alpha),e_{i'',j''}(\frac{\beta}{t^m})]\in[E_n(t^l A, t^{ l}  I), E_n(t^lA,t^{ l} J)].$$
We claim that 
$${}^{e_{i'',j''}(\frac{\beta}{t^m})}\Bigl[e_{i',j'}(t^p\alpha),[ e_{i'',j''}(-\frac{\beta}{t^m}),{e_{i,j} (\frac{a}{t^m})}]\Bigr]\in[E_n(t^l A, t^{ l}  I), E_n(t^lA,t^{ l} J)].$$
We have
\begin{eqnarray*}
&&{}^{e_{i'',j''}(\frac{\beta}{t^m})}\Bigl[e_{i',j'}(t^p\alpha),[ e_{i'',j''}(-\frac{\beta}{t^m}),{e_{i,j} (\frac{a}{t^m})}]\Bigr]\\
&=& {}^{e_{i'',j''}(\frac{\beta}{t^m}){e_{i,j} (-\frac{a}{t^m})}{e_{i,j} (\frac{a}{t^m})}}\Bigl[e_{i',j'}(t^p\alpha),[ e_{i'',j''}(-\frac{\beta}{t^m}),{e_{i,j} (\frac{a}{t^m})}]\Bigr]\\
&=& {}^{e_{i'',j''}(\frac{\beta}{t^m}){e_{i,j} (-\frac{a}{t^m})}}\underbrace{ \Bigl({}^{e_{i,j} (\frac{a}{t^m})}\Bigl[e_{i',j'}(t^p\alpha),[ e_{i'',j''}(-\frac{\beta}{t^m}),{e_{i,j} (\frac{a}{t^m})}]\Bigr]\Bigr)}_{e}.\\
\end{eqnarray*}
Hall-Witt identity implies
\begin{eqnarray*}
e&=&^{e_{i',j'}(t^p\alpha)}\Bigl[[{e_{i,j} (\frac{a}{t^m})},{e_{i',j'}(t^p\alpha)}],{e_{i'',j''}(-\frac{\beta}{t^m})}\Bigr] \times\\
&&\times {}^{e_{i'',j''}(-\frac{\beta}{t^m})}\Bigl[[{e_{i',j'}(t^p\alpha)},{e_{i'',j''}(-\frac{\beta}{t^m})}],{e_{i,j} (\frac{a}{t^m})}\Bigr].
\end{eqnarray*}
By Lemma~\ref{Lem:06}, for any given $p'$, there is a sufficiently large choice of $p'$ such that that  $[{e_{i,j} (\frac{a}{t^m})},{e_{i',j'}(t^p\alpha)}]$  belongs to $[E_n(t^{p'} A, t^{ p'}  I), E_n(t^{p'}A,t^{ p'} A)]=E_n(t^{p'} A, t^{ p'}  I)$. Hence 
$$
\Bigl[[{e_{i,j} (\frac{a}{t^m})},{e_{i',j'}(t^p\alpha)}],{e_{i'',j''}(-\frac{\beta}{t^m})}\Bigr]\in[E_n(t^{p'} A, t^{ p'}  I), E^1_n(\frac{J}{t^m})].
$$
By Lemma~\ref{Lem:left1}, we may choose a sufficiently large $p'$ such that
$$[E_n(t^{p'} A, t^{ p'}  I), E^1_n(\frac{J}{t^m})] \subseteq [E_n(t^l A, t^{ l}  I),E_n(t^lA,t^{ l} J)].$$
Therefore
$$\Bigl[[{e_{i,j} (\frac{a}{t^m})},{e_{i',j'}(t^p\alpha)}],{e_{i'',j''}(-\frac{\beta}{t^m})}\Bigr] \in [E_n(t^l A, t^{ l}  I),E_n(t^lA,t^{ l} J)].$$
It shows that the first factor of $e$ 
$$^{e_{i',j'}(t^p\alpha)}\Bigl[[{e_{i,j} (\frac{a}{t^m})},{e_{i',j'}(t^p\alpha)}],{e_{i'',j''}(-\frac{\beta}{t^m})}\Bigr] \in
[E_n(t^l A, t^{ l}  I),E_n(t^lA,t^{ l} J)].$$

Now we consider the second factor of $e$. Notice that given any $p'$, there is a large enough $p$ such that 
$$[{e_{i',j'}(t^p\alpha)},{e_{i'',j''}(-\frac{\beta}{t^m})}] \in [E_n(t^{p'} A, t^{ p'}  I),E_n(t^{p'}A,t^{ p'} J)] .$$
Then Lemma~\ref{Lemm:04'} implies that there is a sufficiently large $p$ such that
$${}^{e_{i'',j''}(-\frac{\beta}{t^m})}\Bigl[[{e_{i',j'}(t^p\alpha)},{e_{i'',j''}(-\frac{\beta}{t^m})}],{e_{i,j} (\frac{a}{t^m})}\Bigr]
\in [E_n(t^l A, t^{ l}  I),E_n(t^lA,t^{ l} J)].$$
We proves that for any given $l$, there is a large enough $p$ such that 
$$e\in [E_n(t^l A, t^{ l}  I),E_n(t^lA,t^{ l} J)].$$
Hence 
$${}^{e_{i'',j''}(\frac{\beta}{t^m}){e_{i,j} (-\frac{a}{t^m})}}e\in [{}^{E_n^1(\frac{A}{t^m})}{}^{E_n^1(\frac{A}{t^m})}E_n(t^l A, t^{ l}  I),{}^{E_n^1(\frac{A}{t^m})}{}^{E_n^1(\frac{A}{t^m})}E_n(t^lA,t^{ l} J)]$$
By using Lemma~\ref{Lemm:04'} twice, one proves the claim.
 \end{proof}
 }
 \else{}\fi

Denote by $E^L_n\big(\frac{A}{t^m}, \frac{K}{t^m}\big)$ the product of $L$ elements (or fewer) of the form $^{E_n^1(\frac{A}{t^m})}E^1_n\big(\frac{K}{t^m}\big)$ (see also \S\ref{elel}). In the following two Lemmas, as in Lemma~\ref{LemHZ}, all the calculations take place in the fraction ring $A_t$ (see \S\ref{fghdjs}). All the subgroups used in the Lemmas, such as $E_n(A,I)$ or $\GL_n(A,J)$ are in fact the images of these groups in $\GL_n(A_t)$ under the ring homomorphisms $A\rightarrow A_t$. This allows us to use Lemmas such as Lemma~\ref{Lem:Habdank} and the generalized commutator formula on these subgroups, precisely because these are homomorphic images of the similar subgroups in $\GL_n(A)$ which Lemma~\ref{Lem:Habdank}, etc. hold. 

\begin{Lem}\Label{Lem:New1}
Let $A$ be a module finite $R$-algebra, $I,J$ two-sided ideals of $A$, and  $t\in R$.  If $e\in \GL_n(A_t,J_t)$, there is an integer $p$ such that for any $g\in \GL_n(A,t^p I )$ 
$$
[e,g]\in \GL_n\big({A, t^l (IJ+JI)}\big).
$$
\end{Lem}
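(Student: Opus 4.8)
The plan is to run the standard localization bookkeeping: reduce to a Noetherian base, clear denominators in the fixed matrix $e$, compute $[e,g]$ explicitly in $M_n(A_t)$, and read off the ideal in which its entries lie. We may and do assume $R$ is Noetherian, the general case following by writing $A$ as a direct limit of module finite algebras over Noetherian rings and arguing as in \S\ref{sub:1.3}; then $A$ is itself Noetherian and Lemma~\ref{Lem:03} is available. Fix the prescribed $l$. Everything rests on the matrix identity
\[
[x,y]=1+(uv-vu)\,x^{-1}y^{-1},\qquad x=1+u,\ y=1+v,
\]
valid whenever $x,y$ lie in $\GL_n$ of a ring, applied with $x=e$ and $y=g$.

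\emph{Clearing denominators.} Writing $e=1+\xi$ over $A_t$, the hypothesis $e\in\GL_n(A_t,J_t)$ furnishes one integer $m$, depending only on $e$, such that every entry of $\xi$ has the form $\theta_t(\beta)/t^{m}$ with $\beta\in J$, and every entry of $e^{-1}$ has the form $\theta_t(a)/t^{m}$ with $a\in A$. For $g\in\GL_n(A,t^pI)$ write $g=1+\zeta$; every entry of $\zeta$ equals $t^{p}\alpha$ for some $\alpha\in I$, and every entry of $g^{-1}$ lies in $A$.

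\emph{The ideal of the entries.} The identity gives $[e,g]-1=(\xi\zeta-\zeta\xi)\,e^{-1}g^{-1}$. Assuming $p\ge m$, each entry of $\xi\zeta$ is a sum of terms $(\theta_t(\beta)/t^{m})\,\theta_t(t^{p}\alpha)=\theta_t(t^{\,p-m}\beta\alpha)$ with $\beta\in J,\ \alpha\in I$, hence lies in $\theta_t\big(t^{\,p-m}(JI)\big)$, and symmetrically each entry of $\zeta\xi$ lies in $\theta_t\big(t^{\,p-m}(IJ)\big)$. Multiplying on the right by $e^{-1}g^{-1}$, whose entries have the form $\theta_t(a)/t^{m}$ with $a\in A$, and using that $IJ+JI$ is a two-sided ideal of $A$ (so it absorbs the surviving $A$-coefficients), we obtain, for $p\ge 2m$, a matrix $\nu$ over $t^{\,p-2m}(IJ+JI)$ with $\theta_t(1+\nu)=[e,g]$ in $\GL_n(A_t)$.

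\emph{Descent to $\GL_n(A)$.} It remains to certify that $1+\nu$ is genuinely invertible over $A$, hence is an honest element of $\GL_n(A,t^{\,p-2m}(IJ+JI))$ mapping onto $[e,g]$; this is the point that needs care. Running the previous two steps on $[e,g]^{-1}=[g,e]$ yields, for $p\ge 2m$, a matrix $\nu'$ over $t^{\,p-2m}(IJ+JI)$ with $\theta_t(1+\nu')=[e,g]^{-1}$. Then $\theta_t\big((1+\nu)(1+\nu')\big)=1$, so every entry of $(1+\nu)(1+\nu')-1$ lies in $\ker\theta_t=\mathrm{Ann}_A(t^{\infty})$. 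Since $A$ is Noetherian this annihilator equals $\mathrm{Ann}_A(t^{k})$ for some $k$, and $t^{k}A\cap\mathrm{Ann}_A(t^{k})=0$ (this is exactly the mechanism behind Lemma~\ref{Lem:03}). As those entries also lie in $t^{\,p-2m}A$, they vanish once $p-2m\ge k$; the same applies to $(1+\nu')(1+\nu)-1$, so $1+\nu\in\GL_n(A)$ with inverse $1+\nu'$. Finally $p:=2m+\max(l,k)$ satisfies all the constraints $p\ge 2m$, $p-2m\ge k$, $p-2m\ge l$, so $1+\nu\in\GL_n(A,t^{\,l}(IJ+JI))$ maps onto $[e,g]$, which is the assertion; and since $m$ depends only on $e$ and $k$ only on $t$, the integer $p$ depends only on $e$ and $l$. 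I expect this last step — keeping the commutator at the fine level $t^{l}(IJ+JI)$ while simultaneously certifying that it descends from $\GL_n(A_t)$ to $\GL_n(A)$ — to be the main obstacle.
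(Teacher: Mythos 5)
Your entry computation is, in substance, the paper's own proof of this lemma: the paper writes $g=1+\ep$, $e=1+\delta$, expands $[1+\delta,1+\ep]$ directly (rather than via your identity $[x,y]=1+(uv-vu)x^{-1}y^{-1}$), observes that every surviving term contains a factor of each type, concludes that the entries of $[e,g]-1$ lie in $t^{p-2k}(IJ+JI)$ (its $k$ is your denominator bound $m$), and finishes by taking $p\ge l+2k$ — the same bookkeeping as your requirement $p-2m\ge l$. Where you genuinely diverge is your final descent paragraph. The paper does not perform (and, on its reading, does not need) that step inside the lemma: by the convention stated immediately before Lemma~\ref{Lem:New1}, groups such as $\GL_n(A,t^l(IJ+JI))$ are read as their images in $\GL_n(A_t)$, the whole conjugation calculus stays in $A_t$, and the genuine pull-back to $\GL_n(A)$ is deferred to Theorem~\ref{main}, where Lemma~\ref{Lem:03} is invoked once at the end, after the global reduction to Noetherian $R$ has already been made. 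Your annihilator argument ($\ker\theta_t=\operatorname{Ann}_A(t^k)$ and $t^kA\cap\operatorname{Ann}_A(t^k)=0$) is a correct in-place substitute for exactly that mechanism, and it buys a sharper conclusion: an honest preimage of $[e,g]$ in $\GL_n(A,t^l(IJ+JI))$ rather than a congruence of entries. The one soft spot is your opening reduction to Noetherian $R$: the $p$ your argument produces depends on the exponent $k$ at which the annihilators of powers of $t$ stabilize, and this is not controlled along a direct system $A=\varinjlim A_i$, while $g$ ranges over the whole limit; so extracting a single $p$ valid for every $g$ ``by arguing as in \S\ref{sub:1.3}'' needs more care than you give it. In the paper this issue never arises, because the lemma is only applied after the Noetherian reduction has been made once and for all in Theorem~\ref{main}.
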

\begin{proof}
Note that all the entries of $g-1$ and  $g^{-1}-1$ are in $t^pI$ (to emphasize our convention, they are in the image of $t^pI$ under the homomorphism $\theta:A\rightarrow A_t$) and all the entries of $e-1$ and $e^{-1}-1$ are in $J_t$. Choose $k\in \mathbb N$  such that one can write all the entries of $e-1$ and $e^{-1}-1$ of the form $j/t^k$, $j \in J$.
Let 
\begin{eqnarray*}
g=1+\ep& \quad \text{and}\quad &g^{-1}=1+\ep'\\
e=1+\delta& \quad \text{and}\quad &e^{-1}=1+\delta'.
\end{eqnarray*}
A straightforward computation shows that 
\begin{align*}
& \ep+\ep'+\ep\ep'=\ep+\ep'+\ep'\ep=0\\
& \delta +\delta'+\delta\delta'=\delta +\delta'+\delta'\delta=0.
\end{align*}
By the equalities above, one has
$$[e,g]=[1+\delta,1+\ep]=1+\delta'\ep'+\ep\delta'+\ep\delta'\ep'+\delta\delta'\ep'+\delta\ep\delta'+\delta\ep\delta'\ep'.
$$
So  the entries of $[e,g]-1$  belong to  $t^{p-2k}(IJ +JI)$. We finish the proof by choosing $p\ge l+2k$. 
\end{proof}

The following lemma is crucial for proving the main result, i.e., Theorem~\ref{main} of this paper. 

\begin{Lem}\Label{Lem:08}
Let $A$ be a module finite $R$-algebra, $I,J,K$  two-sided ideals of  $A$ and $t\in R$.  For any given $e_2\in E_n(A_t,K_t)$ and an integer $l$, there is a  sufficiently large integer $p$, such that 
\begin{equation}\label{eqn:l1}
[e_1,e_2]\in \Big[\big[E_n(A,t^lI), E_n(A,t^lJ)\big], E_n(A,t^lK)\Big].
\end{equation}
where  $e_1\in [E_n(t^pI), E_n(A,J)]$.
\end{Lem}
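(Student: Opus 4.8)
The plan is to reduce the statement to a computation on a single generator of $e_1$ and then push it through the conjugation-calculus lemmas of Section~1. By Lemma~\ref{Comgenerator} (applied with the ideals $t^pI$ and $J$), the group $[E_n(t^pI),E_n(A,J)]$ is generated by elements of the forms listed in~(\ref{yyttrree}), i.e. conjugates by $c\in E_n(A_t)$ of $[e_{j,i}(\alpha),{}^{e_{i,j}(a)}e_{j,i}(\beta)]$, of $[e_{j,i}(\alpha),e_{i,j}(\beta)]$, of $e_{i,j}(\alpha\beta)$, and of $e_{i,j}(\beta\alpha)$, where now $\alpha\in t^pI$, $\beta\in J$, $a\in A_t$, $c\in E_n(A_t)$. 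Writing $\alpha=t^p\alpha_0$ with $\alpha_0\in I$ and absorbing the power of $t$, each such generator lies in ${}^{E_n^1(\frac{A}{t^m})}E_n^{L}\big(t^{p'}\frac{I}{t^m}\ldots\big)$-type subsets for suitable fixed $L$ and $m$; the key point is that the $E_n(A,J)$-factor contributes only boundedly many elementary matrices with entries of bounded denominator, because $e_2$ is fixed and the generator of $e_1$ is a single commutator. So it suffices to prove~(\ref{eqn:l1}) when $e_1$ is one such generator $g_1$.

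Next I would handle the commutator $[g_1,e_2]$ by the expansion identities (C1), (C2) and the Hall--Witt identity (C3), exactly in the spirit of the commented-out proof in the excerpt. Writing $e_2\in E_n(A_t,K_t)$ and $g_1$ as a short product of (conjugated) elementary matrices with entries in $t^{p'}I$ and $J$, one applies (C$1^+$)/(C$2^+$) to reduce $[g_1,e_2]$ to a product of terms of the shape ${}^{c}\big[[x,y],z\big]$ where $x$ carries an entry in $t^{p'}I$, $y$ an entry in $J$, and $z$ an entry in $K_t$ (with bounded denominator), together with terms coming from the ``$e_{i,j}(\alpha\beta)$'' generators which by Lemma~\ref{Lem:Habdank} already sit in $E_n(IJ+JI)\subseteq[E_n(A,I),E_n(A,J)]$ so their commutator with $e_2$ lands in the target by Lemma~\ref{LemHZ1}. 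For the genuine double commutators, Hall--Witt lets me rewrite ${}^{x}[[x^{-1},y],z]$ in terms of ${}^{z}[[z^{-1},x],y]$ and ${}^{y}[[y^{-1},z],x]$; in each of these the inner commutator $[z^{-1},x]$ (resp. $[y^{-1},z]$) can be forced, by choosing $p$ (hence $p'$) large, into $E_n(t^{p''}A,t^{p''}I)$ (resp. $E_n(t^{p''}A,t^{p''}J)$) using the generalized commutator formula and Lemma~\ref{LemHZ} (\ref{lem5}), so that the outer bracket becomes a commutator of something in $E_n(t^{p''}A,t^{p''}I)$ (or $\ldots J$) with a single elementary matrix $E_n^1(\frac{K}{t^m})$.

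The final step is then to apply Lemma~\ref{LemHZ} (\ref{lem11}) and Lemma~\ref{LemHZ1} (\ref{lemnew}): a commutator $\big[E_n(t^{p''}A,t^{p''}I),E_n^1(\frac{K}{t^m})\big]$ sits, for $p''$ large, inside $\big[E_n(t^lA,t^lI),E_n(t^lA,t^lK)\big]$, and similarly with $I$ replaced by $J$; then using (C4)/(C5) to absorb the outer conjugations by $x,y,z,c\in E_n(A_t)$ via (\ref{lem8}) one lands in $\big[[E_n(A,t^lI),E_n(A,t^lJ)],E_n(A,t^lK)\big]$ after possibly reindexing the roles of $I,J,K$ using the Jacobi-type consequences of (C3) together with (C6). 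Throughout one chases the inequalities among the various auxiliary exponents $p,p',p'',k,m$ backwards, starting from the desired $l$, to see that a single sufficiently large $p$ works; since only finitely many generators of the fixed element $e_2$ and of a single generator $g_1$ of $e_1$ occur, the choice of $p$ is uniform.

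The main obstacle I anticipate is the bookkeeping in the Hall--Witt step: the naive expansion of $[g_1,e_2]$ produces double commutators in which the ``$I$'', ``$J$'' and ``$K$'' labels are not in the standard left-nested order required by the right-hand side of~(\ref{eqn:l1}), and one must repeatedly use Hall--Witt together with the already-known generalized commutator formula to move the $E_n(A,t^lI)$-with-$E_n(A,t^lJ)$ bracket to the inside, while keeping every intermediate term inside an expression of the form $\big[E_n(t^{p''}A,t^{p''}\,\cdot\,),E_n^1(\frac{\cdot}{t^m})\big]$ so that Lemma~\ref{LemHZ} applies. Verifying that the conjugating elements remain controlled (so that (\ref{lem5}) and (\ref{lem8}) are applicable at each stage) is exactly the ``yoga of commutators'' and is where the technical strain lies; the rest is a routine descent through the exponent inequalities.
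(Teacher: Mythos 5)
Your overall strategy diverges from the paper's at a decisive point, and the divergence creates a genuine gap. The paper never splits $e_1$ into its $I$-side and $J$-side: the only facts it uses about $e_1$ are that $e_1\in\GL_n\big(A,t^p(IJ+JI)\big)$ (Lemma~\ref{Lem:Habdank}) and that commutators of $e_1^{\pm1}$ with integral elementary elements lie in $E_n\big(A,t^p(IJ+JI)\big)$ (standard commutator formula). What it decomposes is the fixed $e_2$, down to a single conjugated elementary ${}^{c}e_{i',j'}(\frac{\gamma}{t^m})$ via (C$1^+$) and (\ref{lem5}), and then --- this is the key trick absent from your proposal --- it writes $e_{i',j'}(\frac{\gamma}{t^m})=\big[e_{i',k}(t^{p'}),e_{k,j'}(\frac{\gamma}{t^{m+p'}})\big]$ with $p'$ arbitrarily large, so that Hall--Witt is applied with $x=e_1$, $y={}^{c}e_{i',k}(-t^{p'})$ (integral, carrying a high power of $t$) and $z={}^{c}e_{k,j'}(\frac{\gamma}{t^{m+p'}})$ (the only fractional piece). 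Every inner commutator produced by (C3) then contains $e_1$, hence carries the factor $t^{p}$: the factor $[e_1^{-1},y]$ is tamed by the standard commutator formula plus Lemma~\ref{Lem:Habdank}, while $[z^{-1},e_1]$ is tamed by the entry-level Lemma~\ref{Lem:New1}, landing in $\GL_n\big(A,t^{p''}\big(K(IJ+JI)+(IJ+JI)K\big)\big)$; two applications of Lemma~\ref{Lem:Habdank} then reassemble this triple ideal product into the nested triple commutator. You use neither the commutator decomposition of the $K$-elementary nor Lemma~\ref{Lem:New1}, and that is exactly where your argument breaks.

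Concretely, after splitting a generator of $e_1$ into its elementary constituents you run Hall--Witt with $x$ carrying an entry in $t^{p}I$, $y$ an entry in $J$ (with no power of $t$), and $z$ an entry in $\frac{K}{t^m}$, and you claim the inner commutator $[y^{-1},z]$ can be forced into $E_n(t^{p''}A,t^{p''}J)$ by taking $p$ large. This fails: neither $y$ nor $z$ involves $t^{p}$, so $[y^{-1},z]$ has entries of the shape $\frac{\beta\gamma}{t^m}$ with a genuine denominator, and no choice of $p$, nor any appeal to (\ref{lem5}), (\ref{lem11}) or the generalized commutator formula, removes it; consequently the term ${}^{y}\big[[y^{-1},z],x\big]$ cannot be pushed into the integral groups on the right-hand side of (\ref{eqn:l1}) by your scheme. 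A second, smaller defect: the closing ``reindexing of the roles of $I,J,K$ by Jacobi-type consequences of (C3) and (C6)'' is not available at the level of subgroups; the paper obtains the required nesting not by rearranging brackets but through the ideal computation $K(IJ+JI)+(IJ+JI)K$ together with Lemma~\ref{Lem:Habdank}. (Your preliminary reduction to a single generator of $e_1$ via Lemma~\ref{Comgenerator} is harmless provided the conjugating data are integral, $c\in E_n(A)$ and $a\in A$, not in $A_t$ as you wrote, since the right-hand side of (\ref{eqn:l1}) is normalized by the image of $E_n(A)$; but the Hall--Witt step as you set it up does not go through, so the proof as proposed is incomplete.)
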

\begin{proof}
%
For any given $e_2\in E_n(A_t, K_t)$, one may find some positive integers $m$ and $L$, such that
$$
e_2\in E^L_n\big(\frac{A}{t^m},\frac{K}{t^m}\big).
$$
Applying the identity (C1$^+$) and repeated application of (\ref{lem5}) in Lemma~\ref{LemHZ}, we reduce the problem to show that 
$$
\Big[[E_n(t^pI), E_n(A,J)], {}^ce_{i',j'}(\frac{\gamma}{t^m})\Big] \subseteq \Big[\big[E_n(A,t^lI), E_n(A,t^lJ)\big], E_n(A,t^lK)\Big],
$$
where $c\in E_n^1(\frac{A}{t^m})$ and  $\gamma\in K$.
We further decompose   $e_{i',j'}(\frac{\gamma}{t^m})=[e_{i',k}(t^{p'}),e_{k,j'}(\frac{\gamma}{t^{m+p'}})]$ for some integer $p'$. Then
\begin{equation*}
 \Big [e_1, {}^ce_{i',j'}(\frac{\gamma}{t^m})\Big]=
\Big [e_1, \big [{}^ce_{i',k}(t^{p'}),{}^c e_{k,j'}(\frac{\gamma}{t^{m+p'}})\big]\Big].
\end{equation*}
We use a variant of the Hall-Witt identity (see (C3)) 
$$ \big[x,[y^{-1},z]\big]=  {}^{y^{-1}x}\big[[x^{-1},y],z\big]\, \, {}^{y^{-1}z}\big[[z^{-1},x],y\big],$$ to obtain 
\begin{align}
\Big [e_1,& \big [{}^ce_{i',k}(t^{p'}),{}^c e_{k,j'}(\frac{\gamma}{t^{m+p'}})\big]\Big]=\notag\\
=&{}^{y^{-1}x}\bigg[\Big[e_1^{-1}, {}^ce_{i',k}(-t^{p'})\Big],{}^c e_{k,j'}(\frac{\gamma}{t^{m+p'}})\bigg] \times \notag\\\label{eqn:99}
&{}^{y^{-1}z}\bigg[\Big[{}^c e_{k,j'}(\frac{-\gamma}{t^{m+p'}}), e_1\Big],{}^ce_{i',k}(-t^{p'})\bigg],
\end{align}
where $x=e_1$, $y={}^ce_{i',k}(-t^{p'})$, $z={}^c e_{k,j'}(\frac{\gamma}{t^{m+p'}})$ and as before $c\in E_n^1(\frac{A}{t^{m}})\subseteq E_n^1(\frac{A}{t^{m+p'}})$. 
We will look at each of the two factors of~(\ref{eqn:99}) separately.  

By (\ref{lem5}) in Lemma~\ref{LemHZ}, for any given $p''$, one may find a sufficiently large $p'$ such that 
\begin{equation}\Label{eqn:100}
y={}^ce_{i',k}(-t^{p'})\in E_n(t^{p''}A, t^{p''}A) \subseteq E_n(A).
\end{equation}
Then 
\begin{eqnarray*}
\Big[e_1^{-1}, {}^ce_{i',k}(-t^{p'})\Big] &\in&\big[[E_n(t^p I), E_n(A,J)], E_n(A)\big]\\
&\subseteq &\big [\GL_n(A, t^p(IJ+JI)), E_n(A)\big]\\
&\subseteq & E_n(A, t^p(IJ+JI)).
\end{eqnarray*}
Set $p_1=p$. Thanks to Lemma~\ref{Lem:Habdank}, 
$$E_n\big(A, t^{p_1}(IJ+JI)\big)\subseteq \Big[E_n(t^{\LF {\frac{p_1}{2}\RF}}A), E_n\big(t^{\LF {\frac{p_1}{2}\RF}}(IJ+JI)\big)\Big]\subseteq 
E_n\big(t^{\LF {\frac{p_1}{2}\RF}}A, t^{\LF {\frac{p_1}{2}\RF}}(IJ+JI)\big).
$$
Hence  we obtain that
$$
{}^{y^{-1}x}\bigg[\Big[e_1^{-1}, {}^ce_{i',k}(-t^{p'})\Big],{}^c e_{k,j'}(\frac{\gamma}{t^{m+p'}})\bigg]
\in {}^{y^{-1}x}\bigg[E_n(t^{\LF {\frac{p_1}{2}\RF}}A, t^{\LF {\frac{p_1}{2}\RF}}(IJ+JI)),{}^c e_{k,j'}(\frac{\gamma}{t^{m+p'}})\bigg], $$
where $x\in [E_n(t^{p_1} I), E_n(A,J)]$, $y\in  E_n(t^{p''}A, t^{p''}A)$. By Lemma~\ref{LemHZ1}, for any given integer $l'$ we may find a sufficiently large $p_1$, such that
\begin{align*}
{}^{y^{-1}x}\bigg[E_n(t^{\LF {\frac{p_1}{2}\RF}}A, t^{\LF {\frac{p_1}{2}\RF}}(IJ+JI)),&{}^c e_{k,j'}(\frac{\gamma}{t^{m+p'}})\bigg]
\in \,  {}^{y^{-1}x}\Big[E_n\big (t^{2l'}A, t^{2l'}(IJ+JI)\big), E_n(t^{2l'}A, t^{2l'}K)\Big]\\
&\subseteq {}^{y^{-1}x}\Big[\big[E_n(t^{l'}A, t^{l'} I),E_n(t^{l'}A, t^{l'}J)\big], E_n(t^{2l'}A, t^{2l'}K)\Big]\\
&\subseteq {}^{y^{-1}x}\Big[\big[E_n(t^{l'}A, t^{l'} I),E_n(t^{l'}A, t^{l'}J)\big], E_n(t^{l'}A, t^{l'}K)\Big]\\
&=\Big[\big[{}^{y^{-1}x}E_n(t^{l'}A, t^{l'} I),{}^{y^{-1}x}E_n(t^{l'}A, t^{l'}J)\big], {}^{y^{-1}x}E_n(t^{l'}A, t^{l'}K)\Big],
\end{align*}
where by definition $y^{-1}x\in E_n(\frac{A}{t^{0}},\frac{A}{t^{0}})$. By (\ref{lem5}) in Lemma~\ref{LemHZ}, for any given integer $l$, we may find a sufficiently large $l'$, such that
$$
{}^{y^{-1}x}\Big[\big[E_n(t^{l'}A, t^{l'} I),E_n(t^{l'}A, t^{l'}J)\big], E_n(t^{l'}A, t^{l'}K)\Big]\subseteq\Big[\big[E_n(t^{l}A, t^{l} I),E_n(t^{l}A, t^{l}J)\big], E_n(t^{l}A, t^{l}K)\Big].
$$
This shows that for any given $l$, one may find a sufficiently large $p_1$ such that the first factor of (\ref{eqn:99})
$$
{}^{y^{-1}x}\bigg[\Big[e_1^{-1}, {}^ce_{i',k}(-t^{p'})\Big],{}^c e_{k,j'}(\frac{\gamma}{t^{m+p'}})\bigg] \in 
\Big[\big[E_n(t^{l}A, t^{l} I),E_n(t^{l}A, t^{l}J)\big], E_n(t^{l}A, t^{l}K)\Big].
$$

Next we consider the second factor of (\ref{eqn:99}),
$$
{}^{y^{-1}z}\bigg[\Big[{}^c e_{k,j'}(\frac{-\gamma}{t^{m+p'}}), e_1\Big],{}^ce_{i',k}(-t^{p'})\bigg].$$
Set $p_2=p$. Note that 
$$e_1\in \big[E_n( t^{p_2} I),E_n(A,  J)\big]\subseteq \GL_n\big(A, t^{p_2}(IJ+JI)\big)$$
and
$$
{}^c e_{k,j'}(\frac{\gamma}{t^{m+p'}})\in  {}^{E_n^1(\frac A {t^{m+p'}})}E_n^1(\frac K {t^{m+p'}}),
$$ where $p'$ is given by~(\ref{eqn:100}) from the first part of the proof. 
We may apply Lemma~\ref{Lem:New1} to find a sufficiently large $p_2$ such that 
\begin{equation}\label{jhafyw}
\Big[{}^c e_{k,j'}(\frac{-\gamma}{t^{m+p'}}), e_1\Big]\in  \GL_n\Big(A, t^{p''} (K(IJ+JI)+(IJ+JI)K)\Big)
\end{equation}
for any given $p''$. Using the commutator formula together with (\ref{eqn:100}), one gets
\begin{eqnarray*}
{}^{y^{-1}z}\bigg[\Big[{}^c e_{k,j'}(\frac{-\gamma}{t^{m+p'}}), e_1\Big],{}^ce_{i',k}(-t^{p'})\bigg]&\in&
{}^{y^{-1}z} E_n\Big(A, t^{p''} \big(K(IJ+JI)+(IJ+JI)K\big)\Big)
\end{eqnarray*}
Applying  Lemma~\ref{Lem:Habdank} twice, one gets
\begin{eqnarray*}
E_n\Big(A, t^{p''} \big(K(IJ+JI)+(IJ+JI)K\big)\Big)&\subseteq& \bigg[E_n\Big(t^{\LF\frac {2p''}{3}\RF} \big((IJ+JI)+(IJ+JI)\big)\Big),E_n\Big(t^{\LF\frac {p''}{3}\RF}K\Big)\bigg]\\
&\subseteq&\bigg[\Big[E_n(t^{\LF\frac {p''}{3}\RF}I),E_n(t^{\LF\frac {p''}{3}\RF}J)\Big],E_n(t^{\LF\frac {p''}{3}\RF}K)\bigg].
\end{eqnarray*}
Hence, we have
\begin{eqnarray*}
{}^{y^{-1}z}\bigg[\Big[{}^c e_{k,j'}(\frac{-\gamma}{t^{m+p'}}), e_1\Big],{}^ce_{i',k}(-t^{p'})\bigg]&\subseteq&
{}^{y^{-1}z}\bigg[\Big[E_n(t^{\LF\frac {p''}{3}\RF}I),E_n(t^{\LF\frac {p''}{3}\RF}J)\Big],E_n(t^{\LF\frac {p''}{3}\RF}K)\bigg]\\
&=&\bigg[\Big[{}^{y^{-1}z}E_n(t^{\LF\frac {p''}{3}\RF}I),{}^{y^{-1}z}E_n(t^{\LF\frac {p''}{3}\RF}J)\Big],{}^{y^{-1}z}E_n(t^{\LF\frac {p''}{3}\RF}K)\bigg].
\end{eqnarray*}
Now applying (\ref{lem5}) in Lemma~\ref{LemHZ} to every components of the commutator above, we may find a sufficiently large $p''$ such that for any given $l$,
$$
\bigg[\Big[{}^{y^{-1}z}E_n(t^{\LF\frac {p''}{3}\RF}I),{}^{y^{-1}z}E_n(t^{\LF\frac {p''}{3}\RF}J)\Big],{}^{y^{-1}z}E_n(t^{\LF\frac {p''}{3}\RF}K)\bigg]\subseteq \Big[\big[E_n(t^{l}A, t^{l} I),E_n(t^{l}A, t^{l}J)\big], E_n(t^{l}A, t^{l}K)\Big].
$$
Choose $p_2$ in (\ref{jhafyw}) according to this $p''$ and then consider $p$ to be the largest among $p_1$ and $p_2$. This finishes the Lemma.
\end{proof}

\section{Main result}
Now we are in a position to prove the main result of the paper, namely the multiple commutator formulas. We first study  the $3$-folded commutator formula. The $n$-folded commutator formula is an easy application of the following theorem.
 Note that so far most of the conjugation calculus has been performed in $A_t$. Using the fact that for a suitable positive integer $l$, by Lemma~\ref{Lem:03}, the restriction of $\theta_t$ to $\GL_n(A,t^lA)$ induces an injective homomorphism $\theta_t: \GL_n(A,t^lA) \rightarrow \GL_n(A_t)$, we are able to ``pull back'' the elements into the group $\GL_n(A)$. This will be used in the Theorem~\ref{main}.

\begin{The}\label{main}
Let $A$ be a quasi-finite $R$-algebra and $I, J, K$ be two-sided ideals of $A$. Then for $n\ge 3$, 
\begin{equation}\Label{eq:final}
\Big[\big[E_n(A,I),\GL_n(A,J)\big],\GL_n(A,K)\Big]=
\Big[\big[E_n(A,I),E_n(A,J)\big],E_n(A,K)\Big]
\end{equation}
\end{The}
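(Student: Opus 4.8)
\emph{Outline of the argument.} The inclusion $\supseteq$ is immediate since the commutator of subgroups is monotone in each argument, so only $\subseteq$ requires proof.

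\textbf{Step 1 (reduction to the module finite Noetherian case).} Since $\GL_n$, $E_n$ and their relative versions commute with direct limits (\S\ref{sub:1.3}), and since the formation of iterated commutator subgroups commutes with direct limits as well, Proposition~\ref{Prop:01}(2) lets me realize both sides of~(\ref{eq:final}) as direct limits of the analogous expressions over module finite algebras $A_i$ over finitely generated (hence Noetherian) commutative rings $R_i$, with ideals $I_i,J_i,K_i$ limiting to $I,J,K$. So from now on $A$ is module finite over a commutative Noetherian ring $R$, and all the cited results (Theorem~\ref{standard}, the generalized commutator formula, Lemmas~\ref{Lem:03}, \ref{LemHZ}, \ref{LemHZ1}, \ref{Lem:08}) are available over $A$ and over each localization $A_t$.

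\textbf{Step 2 (removing the middle $\GL$ and the elementary generators).} By the generalized commutator formula $[E_n(A,I),\GL_n(A,J)]=[E_n(A,I),E_n(A,J)]=:H$ (Theorem~\ref{hhggff}, extended to module finite rings in \cite{RHZZ1}), the left-hand side of~(\ref{eq:final}) equals $[H,\GL_n(A,K)]$; and by Theorem~\ref{standard} the subgroups $E_n(A,I),E_n(A,J),E_n(A,K)$ are normal in $\GL_n(A)$, hence so are $H$ and $C:=[H,E_n(A,K)]$, the right-hand side of~(\ref{eq:final}). Thus it suffices to prove $[h,g]\in C$ for every $g\in\GL_n(A,K)$ and every $h$ in the generating set of $H$ given by Lemma~\ref{Comgenerator}. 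Using (C$2^+$) and the normality of $C$ in $\GL_n(A)$, together with $[s^{-1},g]={}^{s^{-1}}[s,g]^{-1}$, reduces this to the individual generators of~(\ref{yyttrree}). For the two elementary types ${}^ce_{i,j}(\alpha\beta)$ and ${}^ce_{i,j}(\beta\alpha)$ one has $h\in E_n(A,IJ+JI)\subseteq H$ (Lemma~\ref{Lem:Habdank}), so $[h,g]\in[E_n(A,IJ+JI),\GL_n(A,K)]=[E_n(A,IJ+JI),E_n(A,K)]\subseteq[H,E_n(A,K)]=C$ by the generalized commutator formula applied to the ideals $IJ+JI$ and $K$. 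Finally, using (C5) and the normality of $C$ to strip off the outer conjugation $c\in E_n(A)$, the whole theorem comes down to showing
\[
\big[[e_{j,i}(\alpha),{}^{e_{i,j}(a)}e_{j,i}(\beta)],\,g\big]\in C,\qquad \alpha\in I,\ \beta\in J,\ a\in A,\ g\in\GL_n(A,K).
\]

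\textbf{Step 3 (localization and patching).} Write $h_0=[e_{j,i}(\alpha),{}^{e_{i,j}(a)}e_{j,i}(\beta)]$. For a maximal ideal $\gm$ of $R$ I pick $t=t_\gm\notin\gm$, pass to $A_t$, and decompose the elementary factor $e_{j,i}(\alpha)=[e_{j,k}(\alpha/t^{N}),e_{k,i}(t^{N})]$ for a third index $k$ (here $n\ge 3$ is used) and a large $N$, so that $e_{k,i}(t^{N})\in E_n(t^{N}A)$ is ``deep''. Repeated application of the Hall--Witt identity (C3) then rewrites $\theta_t([h_0,g])$ as a product of conjugates of commutators in which $\theta_t(g)$ occurs only inside expressions of the form $[e(t^{N}),\theta_t(g)]$ (elementary matrix of entry a power of $t$, commuted with $\theta_t(g)$), which lie in $E_n(A_t,K_t)$ by Theorem~\ref{standard} applied over $A_t$, while the contributions of $\alpha$ and $\beta$ are collected --- after transporting powers of $t$ onto $\alpha$ via (E3) --- into the group $[E_n(t^{p}I),E_n(A,J)]$ for a large $p$; this is exactly the hypothesis of Lemma~\ref{Lem:08}. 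Feeding these pieces into Lemma~\ref{Lem:08} and absorbing the remaining outer conjugations with Lemmas~\ref{LemHZ} and~\ref{LemHZ1}, I obtain, for any prescribed $l$ and for $t$ chosen deep enough, that $\theta_t([h_0,g])$ lies in the image of $\big[[E_n(A,t^{l}I),E_n(A,t^{l}J)],E_n(A,t^{l}K)\big]\subseteq C$. Since this group is contained in $\GL_n(A,t^{l}A)$, on which $\theta_t$ is injective by Lemma~\ref{Lem:03} (for $l$ large), the local statement lifts back to $\GL_n(A)$ up to $\ker\theta_t$. As finitely many of the $t_\gm$ generate the unit ideal of $R$, combining the resulting congruences by the standard partition-of-unity patching argument of Bak (see \cite[\S13]{RN} and \cite{B4}) yields $[h_0,g]\in C$, completing the proof. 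The $n$-fold case follows by an easy induction, grafting one extra factor at a time.

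\textbf{Expected main obstacle.} Steps~1 and~2 are formal once the standard and generalized commutator formulas, Lemma~\ref{Lem:Habdank} and Lemma~\ref{Comgenerator} are in hand. The real work is Step~3: choosing the exact chain of Hall--Witt rewritings and carefully bookkeeping the ``depths'' (the powers of $t$) so that $[h_0,g]$ is brought precisely into the shape required by Lemma~\ref{Lem:08} while every intermediate term stays deep enough to be pulled back through Lemma~\ref{Lem:03}, and then assembling the local data into a global conclusion. This is the ``yoga of commutators'', and it is where essentially all of the technical effort lies.
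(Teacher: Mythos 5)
Your Steps 1 and 2 match the paper's proof (direct-limit reduction, the generalized commutator formula, Lemma~\ref{Comgenerator}, and the disposal of the generators of type ${}^ce_{i,j}(\alpha\beta)$ via Lemma~\ref{Lem:Habdank}); the gap is in Step 3, which is exactly where the work lies. The first problem is your local-to-global mechanism. You establish (at best) that $\theta_t([h_0,g])$ lies in $\theta_t\big(\big[[E_n(A,t^lI),E_n(A,t^lJ)],E_n(A,t^lK)\big]\big)$ for each $t=t_\gm$, and then invoke injectivity ``up to $\ker\theta_t$'' plus an unspecified ``standard patching argument''. But Lemma~\ref{Lem:03} is injectivity only on $\GL_n(A,t^lA)$, and $[h_0,g]$ itself has no $t$-depth; knowing its image agrees with the image of an element of $C$ gives only a congruence modulo $\ker\theta_t$, and there is no black-box device turning finitely many congruences modulo the kernels $\ker\theta_{t_l}$ into membership in a commutator subgroup. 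The paper's proof avoids this by splitting \emph{before} localizing: choosing $\sum_l t_l^{p_l}x_l=1$ and writing $e_{j,i}(\alpha)=\prod_l e_{j,i}(t_l^{p_l}x_l\alpha)$, so that via (C$2^+$) the element $[h_0,g]$ factors into pieces each of which is globally $t_l$-deep; each piece and its candidate match in $C$ then both lie in $\GL_n(A,t_l^{l'}A)$, where $\theta_{t_l}$ \emph{is} injective, and the pull-back is legitimate piece by piece. Your decomposition $e_{j,i}(\alpha)=[e_{j,k}(\alpha/t^N),e_{k,i}(t^N)]$ is made after localization, puts the depth on the auxiliary factor and a denominator on $\alpha$, and produces no global factorization of $[h_0,g]$ into deep pieces, so nothing can be pulled back.

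The second problem is the handling of $g\in\GL_n(A,K)$. The paper converts $g$ locally into an elementary element: since $A_\gm$ is semilocal of stable rank $1$, $\theta_\gm(g)=\ep h$ with $\ep\in E_n(A_\gm,K_\gm)$ and $h$ diagonal with a single nontrivial entry which, using $n\ge 3$, can be placed so that $h$ commutes with $[e_{j,i}(\cdot),{}^{e_{i,j}(r)}e_{j,i}(\beta)]$; only after this is Lemma~\ref{Lem:08} applicable, with $e_2=\ep$. You replace this by the assertion that ``repeated Hall--Witt'' rewrites $\theta_t([h_0,g])$ so that $g$ occurs only inside $[e(t^N),\theta_t(g)]\in E_n(A_t,K_t)$ while the $\alpha,\beta$-contributions land precisely in $[E_n(t^pI),E_n(A,J)]$. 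This is claimed, not carried out, and it is doubtful: the natural Hall--Witt expansions of $\big[[e_{j,i}(\alpha),u],g\big]$ also produce terms in which $g$ is commutated with the non-deep factors (e.g. $[g^{-1},e_{j,i}(-\alpha)]$) and they scramble which ideal sits in which slot of the triple bracket, so one does not obviously arrive at the input shape of Lemma~\ref{Lem:08} nor at the bracketing $\big[[\,\cdot_I,\cdot_J\,],\cdot_K\big]$. Without a worked-out substitute for the semilocal decomposition and for the pre-localization partition of unity, the core of the proof is missing.
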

\begin{proof}
The functors $E_n$ and $\GL_n$ commute with direct limits. By Proposition~\ref{Prop:01} and \S\ref{sub:1.3}, one reduces the proof  to the case $A$ is finite over $R$ and $R$ is Noetherian. 

First by the generalized commutator formula, we have
\begin{equation}\label{fin:1}
\big[E_n(A,I),\GL_n(A,J)\big]=\big[E_n(A,I),E_n(A,J)\big].
\end{equation}
Thus it suffices  to prove the following equation
$$
\Big[\big[E_n(A,I),E_n(A,J)\big],\GL_n(A,K)\Big]=
\Big[\big[E_n(A,I),E_n(A,J)\big],E_n(A,K)\Big].
$$
By Lemma~\ref{Comgenerator}, $\big[E_n(A,I),E_n(A,J)\big]$ is generated by the conjugates of the following four  types of elements
$$e=\Big[e_{j,i}(\alpha), {}^{e_{i,j}(r)}e_{j,i}(\beta)\Big],\quad e=\big[e_{j,i}(\alpha), e_{i,j}(\beta)\big],\quad e=e_{i,j}(\alpha\beta),\quad \text{and    } e=e_{i,j}(\beta\alpha),$$
where $i\ne j$, $\alpha\in I$, $\beta\in J$. We claim that for any $g\in E_n(A,K)$, $$\big[e,g] \in \Big[\big[E_n(A,I),E_n(A,J)\big],E_n(A,K)\Big].$$

For any maximal ideal $\gm_l\lhd R$, choose a $ t_l \in R\backslash \gm_l$ and an arbitrary positive integer $p_l$. (We will later choose $p_k$ according to Lemma~\ref{Lem:08}.)  
Since the collection of all $t_l^{p_l}$ is not contained in any maximal ideal, we may find a finite number of $t_l$ and $x_l\in R$, $l=1,\dots,k$ (relabeling if necessary) such that
$$
\sum_{l}t_l^{p_l}x_l=1.
$$
First we take the generators of the first kind, namely the conjugates of $e=\Big[e_{j,i}(\alpha),{}^{e_{i,j}(r)}e_{j,i}(\beta)\Big]$. Consider
$$
e=\Big[e_{j,i}(\alpha),{}^{e_{i,j}(r)}e_{j,i}(\beta)\Big]=\Big[e_{j,i}\Big((\sum_{l}t_l^{p_l}x_l)\alpha\Big),{}^{e_{i,j}(r)}e_{j,i}(\beta)\Big]=\Big[\prod_{l}e_{j,i}(t_l^{p_l}x_l\alpha),{}^{e_{i,j}(r)}e_{j,i}(\beta)\Big].
$$
By (C$2^+$) identity, $e=\displaystyle \Big[\prod_{l}e_{j,i}(t_l^{p_l}x_l\alpha),{}^{e_{i,j}(r)}e_{j,i}(\beta)\Big]$ can be written as a product of the following form:
\begin{equation}\label{hfsis}
e=\Big({}^{e_1}\Big[e_{j,i}(t_1^{p_1}x_1\alpha),{}^{e_{i,j}(r)}e_{j,i}(\beta)\Big]\Big)\Big({}^{e_2}\Big[e_{j,i}(t_2^{p_2}x_2\alpha),{}^{e_{i,j}(r)}e_{j,i}(\beta)\Big]\Big)\cdots \Big({}^{e_k}\Big[e_{j,i}(t_k^{p_k}x_k\alpha),{}^{e_{i,j}(r)}e_{j,i}(\beta)\Big]\Big),
\end{equation}
where $e_1,e_2,\ldots e_m\in E_n(A)$. Note that all $e_i$'s are products of elementary matrices of the form $e_{j,i}(A)$. Thus $e_l=e_{j,i}(a_l)$, $l=1,\dots,k$, which clearly commutes with $e_{j,i}(a)$ for any $a\in A$. So the commutator~(\ref{hfsis}) equals to 
\begin{equation}\label{hfsis1}
e=\Big(\Big[e_{j,i}(t_1^{p_1}x_1\alpha),{}^{e_1}{}^{e_{i,j}(r)}e_{j,i}(\beta)\Big]\Big)\Big(\Big[e_{j,i}(t_2^{p_2}x_2\alpha),{}^{e_2}{}^{e_{i,j}(r)}e_{j,i}(\beta)\Big]\Big)\cdots \Big(\Big[e_{j,i}(t_k^{p_k}x_k\alpha),{}^{e_k}{}^{e_{i,j}(r)}e_{j,i}(\beta)\Big]\Big).
\end{equation}

Thus $$[e,g]=\bigg[\Big[e_{j,i}(\alpha),{}^{e_{i,j}(r)}e_{j,i}(\beta)\Big],g\bigg]=\bigg[\displaystyle \Big[\prod_{l}e_{j,i}(t_l^{p_l}x_l\alpha),{}^{e_{i,j}(r)}e_{j,i}(\beta)\Big],g\bigg]. 
$$ Using (C$2^+)$ and in view of~(\ref{hfsis1}) we obtain that $[e,g]$ is a  product of the conjugates in $E_n(A)$ of $$\bigg[\Big[e_{j,i}(t_{i'}^{p_{i'}}x_{i'}\alpha),{}^{e_{j,i}(a_l)e_{i,j}(r)}e_{j,i}(\beta)\Big],g\bigg],$$ where $a_l \in A$ and $l=1,\ldots,k$.
 
For any maximal ideal $\gm$ of $R$, the ring $A_\gm$ contains $K_\gm$ as an ideal.   
Consider the natural homomorphism $\theta_\gm:A\rightarrow A_\gm$ which induces a homomorphism (call it again $\theta_\gm$) on the level of general linear groups, $\theta_\gm:\GL_{n}(A)\rightarrow \GL_{n}(A_\gm)$. 
 Therefore, for  $g\in \GL_n(A, K)$, $\theta_\gm(g)\in \GL_n(A_\gm,K_\gm)$. 
 Since $A_\gm$ is module finite over the local ring $R_\gm$, $A_\gm$ is semilocal  \cite[III(2.5), (2.11)]{Bass1}, therefore  its stable rank is $1$. It follows that  $\GL_n(A_\gm,K_\gm)=E_n(A_\gm,K_\gm)\GL_1(A_\gm,K_\gm)$ (see \cite[Th.~4.2.5]{HO}).  So $\theta_\gm(g)$ can be decomposed as $\theta_\gm(g)=\ep h$, where $\ep \in E_n(A_m, K_m)$ and $h$ is a diagonal matrix all of whose diagonal coefficients are $1$, except possibly the $k$-th diagonal coefficient, and $k$ can be chosen arbitrarily.

By (\S\ref{sub:1.3}), we may reduce the problem  to the case $A_t$ with $t\in R\backslash \gm$. Namely $\theta_t(g)$ is a product of $\ep$ and $h$, where $\ep\in E_n(A_t, K_t)$, and $h$ is a diagonal matrix with only one non-trivial diagonal entry which lies in $A_t$.   Note that all $\Big[e_{j,i}(t_{i'}^{p_{i'}}x_{i'}\alpha),{}^{e_{j,i}(a_l)e_{i,j}(r)}e_{j,i}(\beta)\Big]$ with $i'=1,\ldots, k$  differ from the identity matrix at only the $i,j$ rows and the $i,j$ columns. By the assumption $n>2$, we may choose $h$ so that it commutes with 
$\Big[e_{j,i}(t_{i'}^{p_{i'}}x_{i'}\alpha),{}^{e_{j,i}(a_l)e_{i,j}(r)}e_{j,i}(\beta)\Big]$. 
This allows us to reduce our consideration to the case 
$$
\bigg[\Big[e_{j,i}(t_{i'}^{p_{i'}}x_k\alpha),{}^{e_{j,i}(a_l)e_{i,j}(r)}e_{j,i}(\beta)\Big], \ep\bigg].
$$
By Lemma~\ref{Lem:08}, one gets that for any given $l_{i'}$, there is a sufficiently large $p_{i'}$ for every  $i'=1,\dots k$, such that
$$
\bigg[\Big[e_{j,i}(t_{i'}^{p_{i'}}x_k\alpha),{}^{e_{j,i}(a_l)e_{i,j}(r)}e_{j,i}(\beta)\Big], \ep\bigg]
\in\Big[\big[E_n(A,t^{l_{i'}}I), E_n(A,t^{l_{i'}}J)\big], E_n(A,t^{l_{i'}}K)\Big].
$$
Let's choose every $l_i$ to be large enough so that the restriction of $\theta_t:  GL_n(t^{l_{i'}}A)\to GL_n(A_t)$ is injective. Then for every $i'$, we have
$$\bigg[\Big[e_{j,i}(t_{i'}^{p_{i'}}x_k\alpha),{}^{e_{j,i}(a_l)e_{i,j}(r)}e_{j,i}(\beta)\Big], \ep\bigg]\in 
\Big[\big[E_n(A,I),E_n(A,J)\big],E_n(A,K)\Big].$$
Hence $[e,g]\in \Big[\big[E_n(A,I),E_n(A,J)\big],E_n(A,K)\Big].$

When the generator is of the second kind, $e=[e_{i,j}(\alpha),e_{j,i}(\beta)]$, a similar argument goes through, which is left to the reader.

Now consider the generators of the 3rd and 4th kind, namely, the conjugates of the following two types of elements,
$$e=e_{i,j}(\alpha\beta),\quad \text{or    } e=e_{i,j}(\beta\alpha).$$
By the normality of $E_{n}(A,IJ+JI)$,  the conjugates of $e$ are in $ E_{n}(A, IJ+JI)$. Then
$$
[e,g]\in \big[E_{n}(A, IJ+JI), \GL_n(A,K)\big].
$$
By the generalized commutator formula, one obtains 
$$
\big [E_{n}(A, IJ+JI), \GL_n(A,K)\big]=\big[E_{n}(A, IJ+JI), E_n(A,K)\big].
$$
Now applying Lemma~\ref{Lem:Habdank}, we finally get 
$$
[E_{n}(A, IJ+JI), E_n(A,K)]\subseteq \Big[\big[E_n(A,I),E_n(A,J)\big],E_n(A,K)\Big].
$$
Therefore  $[e,g]\in \Big[\big[E_n(A,I),E_n(A,J)\big],E_n(A,K)\Big].$
This proves our claim.

Let $e \in \big[E_n(A,I),\GL_n(A,J)\big]$, and $g\in \GL_n(A,K)$. Then by Lemma~\ref{Comgenerator},
$$e={}^{c_1}e_{1}\times{}^{c_2}e_{2}\times\cdots\times {}^{c_k}e_{k}$$
with $c_i \in E_n(A)$ and $e_{i}$ takes the form in (\ref{yyttrree}). Thanks to (C$2^+$) identity, it suffices to show that
$$[{}^{c_1}e_{1},g]\in\Big[\big[E_n(A,I),E_n(A,J)\big],E_n(A,K)\Big].$$
The normality of $E_n$ and $\GL_n$ groups reduces the problem to show that
$$[e_i,g]\in\Big[\big[E_n(A,I),E_n(A,J)\big],E_n(A,K)\Big] ,\quad i=1,\dots,k.$$
But this exactly what has been shown above. This completes the proof. 
\end{proof}


\begin{Cor}\label{comain}
Let $A$ be  a quasi-finite ring with identity and  $I_i $, $i=0,...,m$, be two-sided ideals of $A$. Then 
\begin{equation}\label{corll11}
\begin{split}
\Big [E_n(A,I_0),\GL_n(A,I_1),& \GL_n(A, I_2),\ldots, \GL_n(A, I_m) \Big]\\
&=\Big[E_n(A,I_0),E_n(A,I_1),E_n(A, I_2),\ldots, E_n(A, I_m) \Big].
\end{split}
\end{equation}
\end{Cor}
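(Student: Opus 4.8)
The plan is to obtain~(\ref{corll11}) from the three-fold Theorem~\ref{main} by induction on $m$. The inclusion $\supseteq$ is immediate from $E_n(A,I_i)\subseteq\GL_n(A,I_i)$, so only $\subseteq$ is at issue, and the cases $m=1$ (the generalized commutator formula) and $m=2$ (Theorem~\ref{main}) are already available. For the inductive step I would write the left-normed commutator in~(\ref{corll11}) as $\big[\,[E_n(A,I_0),\GL_n(A,I_1),\dots,\GL_n(A,I_{m-1})]\,,\,\GL_n(A,I_m)\,\big]$ and apply the induction hypothesis to the inner $m$-fold commutator. This reduces the whole statement to the single assertion
\[
\big[W,\GL_n(A,I_m)\big]\subseteq\big[W,E_n(A,I_m)\big],\qquad W:=\big[E_n(A,I_0),E_n(A,I_1),\dots,E_n(A,I_{m-1})\big].
\]
Here $W$ enjoys exactly the two structural properties that made the proof of Theorem~\ref{main} possible for $W=[E_n(A,I_0),E_n(A,I_1)]$: it is normalized by $E_n(A)$ (being built from the $E_n(A,I_j)$), and iterating Lemma~\ref{Lem:Habdank} produces an ideal $N$, a finite sum of products of $I_0,\dots,I_{m-1}$, with $E_n(A,N)\subseteq W\subseteq\GL_n(A,N)$.

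So the proposal is to rerun the argument of Theorem~\ref{main} with this more general $W$ in place of $[E_n(A,I_0),E_n(A,I_1)]$. One first reduces to $A$ module finite over a Noetherian ring by the direct-limit discussion of \S\ref{sub:1.3}. Next, instead of Lemma~\ref{Comgenerator} one iterates Lemma~\ref{Engenerator} together with (C$1^+$) and (C$2^+$) to present $W$ as the subgroup generated by $E_n(A)$-conjugates of iterated commutators of elementary generators; peeling the conjugations onto the target by the normality properties of the relative elementary and congruence subgroups (Theorem~\ref{standard}), one is left to check that $[\,e,g\,]\in\big[W,E_n(A,I_m)\big]$ for a fixed such generator $e$ and an arbitrary $g\in\GL_n(A,I_m)$. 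Then the partition-of-unity trick $\sum_l t_l^{p_l}x_l=1$ is applied exactly as in Theorem~\ref{main} to the innermost, $I_0$-coloured, argument of $e$, pushing the level of the first argument down to $t^p$; one passes to $A_t$; and, using that $A_\gm$ is semilocal, one factors $\theta_t(g)=\ep h$ with $\ep\in E_n(A_t,(I_m)_t)$ and $h$ a diagonal matrix with a single nontrivial, invertible, diagonal entry in $A_t$. The one point of departure from Theorem~\ref{main} is that now a single generator $e$ of $W$ may involve more than two rows and columns, so $h$ need no longer commute with it; instead I would split $[\,e,\ep h\,]=[\,e,\ep\,]\cdot{}^{\ep}[\,e,h\,]$ by (C1) and observe that, since $e\in\GL_n(A,t^pN')$ for a product ideal $N'$ and $h\equiv 1$ modulo $(I_m)_t$, a direct entry computation of the kind carried out in Lemma~\ref{Lem:New1} puts $[\,e,h\,]$ into $\GL_n(A,t^l(N'I_m+I_mN'))$ once $p$ is large; by Lemma~\ref{Lem:Habdank} this lies in $\big[E_n(A,t^{l'}N'),E_n(A,t^{l'}I_m)\big]$, and absorbing $E_n(A,t^{l'}N')$ into the $m$-fold commutator $\big[E_n(t^{l''}I_0),\dots,E_n(t^{l''}I_{m-1})\big]$ by iterated Lemma~\ref{Lem:Habdank} places $[\,e,h\,]$ inside the target $(m{+}1)$-fold commutator. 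One then pulls everything back from $\GL_n(A_t)$ to $\GL_n(A)$ via Lemma~\ref{Lem:03}, and the bottom-of-the-commutator generators of the shape $e_{i,j}(\alpha\beta)$, $e_{i,j}(\beta\alpha)$ are handled, as in Theorem~\ref{main}, by the generalized commutator formula and Lemma~\ref{Lem:Habdank}.

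The part I expect to be the real work is the multifold analogue of Lemma~\ref{Lem:08} needed to handle $[\,e,\ep\,]$: for every $\ep\in E_n(A_t,(I_m)_t)$ and every $l$ there is $p$ so large that $[\,e_1,\ep\,]\in\big[E_n(A,t^lI_0),E_n(A,t^lI_1),\dots,E_n(A,t^lI_m)\big]$ whenever $e_1\in\big[E_n(t^pI_0),E_n(A,I_1),\dots,E_n(A,I_{m-1})\big]$. I would prove this by an inner induction on $m$, with $m=2$ being Lemma~\ref{Lem:08}, following the same pattern: write $\ep$ as a bounded product of conjugates ${}^{c}e_{i',j'}(\gamma/t^{m'})$ with $\gamma\in I_m$, factor $e_{i',j'}(\gamma/t^{m'})=\big[e_{i',k}(t^{p'}),e_{k,j'}(\gamma/t^{m'+p'})\big]$, and apply the Hall--Witt variant $[\,x,[y^{-1},z]\,]={}^{y^{-1}x}[\,[x^{-1},y],z\,]\cdot{}^{y^{-1}z}[\,[z^{-1},x],y\,]$ with $x=e_1$. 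In the first factor the inner commutator $[e_1^{-1},e_{i',k}(-t^{p'})]$ sits, by Theorem~\ref{standard} applied to $e_1\in\GL_n(A,t^pN')$, in $E_n(A,t^pN')$, and then (\ref{lem11}) of Lemma~\ref{LemHZ} together with iterated Lemma~\ref{Lem:Habdank} pushes it into the target; in the second factor the inner commutator $[e_{k,j'}(-\gamma/t^{m'+p'}),e_1]$ is brought into $\GL_n(A,t^{p''}(N'I_m+I_mN'))$ by Lemma~\ref{Lem:New1} and then, again by Theorem~\ref{standard} and iterated Lemma~\ref{Lem:Habdank}, into the target; the outer conjugations by $y^{-1}x$ and $y^{-1}z$ are absorbed by repeated use of (\ref{lem5}) and (\ref{lemnew}), noting $y^{-1}x\in E_n(A)$. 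The delicacy, as always in this circle of ideas, is purely in the bookkeeping: choosing the auxiliary exponents $p,p',p'',l',l'',\dots$ in the right order so that each invocation of (\ref{lem5})--(\ref{lem11}), of Lemma~\ref{Lem:New1} and of Lemma~\ref{Lem:Habdank} applies, and checking after each Hall--Witt step that the subgroup produced can indeed be rebracketed, via monotonicity and $E_n(A,N)\subseteq W$, into the left-normed $(m{+}1)$-fold commutator. Everything else is a routine transcription of \S\ref{ghgh}.
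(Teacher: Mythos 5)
Your induction runs in the opposite direction from the paper's, and at the point where the two diverge your argument has a genuine gap. The paper's proof of Corollary~\ref{comain} is purely formal once Theorem~\ref{main} is in hand: it collapses the innermost triple $\big[[E_n(A,I_0),\GL_n(A,I_1)],\GL_n(A,I_2)\big]$ by Theorem~\ref{main}, then uses Lemma~\ref{Lem:Habdank} to replace $[E_n(A,I_0),E_n(A,I_1)]$ by $\GL_n(A,I_0I_1+I_1I_0)$, so that the resulting expression involves one ideal fewer and the induction hypothesis applies, and finally re-expands $E_n(A,I_0I_1+I_1I_0)$ inside $[E_n(A,I_0),E_n(A,I_1)]$ by Lemma~\ref{Lem:Habdank} again; no localization calculus beyond Theorem~\ref{main} is needed. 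You instead peel the outermost bracket, which reduces the corollary to $[W,\GL_n(A,I_m)]\subseteq[W,E_n(A,I_m)]$ with $W=[E_n(A,I_0),\dots,E_n(A,I_{m-1})]$ --- a statement essentially as strong as the corollary itself --- and you must then redo the entire localization machinery (multifold analogues of Lemma~\ref{Comgenerator} and Lemma~\ref{Lem:08}) for this $W$.

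The concrete failure is in your treatment of the diagonal factor $h$. You rightly observe that a multifold generator $e$ of $W$ may occupy more than two rows and columns, so the trick of Theorem~\ref{main} (two-rowed generators from Lemma~\ref{Comgenerator} plus free choice of the position of the nontrivial entry of $h$, using $n\ge 3$) no longer forces $[e,h]=1$. Your substitute is to bound $[e,h]$ by an entry computation as in Lemma~\ref{Lem:New1}, placing it in $\GL_n\big(A,t^l(N'I_m+I_mN')\big)$, and then to conclude membership in $\big[E_n(A,t^{l'}N'),E_n(A,t^{l'}I_m)\big]$ ``by Lemma~\ref{Lem:Habdank}''. That lemma does not give this: its inclusions read $E_n(A,IJ+JI)\subseteq[E_n(A,I),E_n(A,J)]\subseteq\GL_n(A,IJ+JI)$, so only the \emph{elementary} subgroup of the product level sits inside the commutator, whereas an element of the congruence subgroup $\GL_n(A,t^l(N'I_m+I_mN'))$ need not even lie in $E_n(A)$ (nontriviality of relative $K_1$), and no choice of the exponents $p,l,l'$ repairs this. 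Since $h$ is not a product of elementary factors, the Hall--Witt scheme of Lemma~\ref{Lem:08} is not available for $[e,h]$ either, so this term is left genuinely unproven --- it is precisely the kind of commutator with a non-elementary matrix that the paper's arrangement is designed never to have to estimate. (A secondary issue of the same flavour: the partition-of-unity step and the commuting of the (C$2^+$) conjugating factors past $e_{j,i}(t^{p}x\alpha)$ also exploit the special two-rowed shape of the paper's generators and do not transcribe verbatim to multifold $e$.) The clean repair is to abandon the outer peeling and argue by the paper's inner collapse.
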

\begin{proof}
We prove the statement by induction. For $i=1$ this is the generalized commutator formula $$\big[E_n(A,I_0),\GL_n(A,I_1)\big]=\big[E_n(A,I_0),E_n(A,I_1)\big]$$ which was proved in \cite{RHZZ1}. For $i=2$, this is proved in Theorem~\ref{main} which will be the first step of induction.
Suppose the statement is valid for $i=m-1$ (i.e., there are $m$ ideals in the commutator formula). To prove ~(\ref{corll11}), using Theorem~\ref{main}, we have
\[
\begin{split}
 \bigg [\Big[\big[E_n(A,I_0),\GL_n(A,I_1)\big], \GL_n(A, I_2)\Big],&\GL_n(A, I_3),\ldots, \GL_n(A, I_m) \bigg ]=\\
& \bigg [\Big[\big[E_n(A,I_0),E_n(A,I_1)\big ],E_n(A, I_2)\Big],\GL_n(A, I_3),\ldots, \GL_n(A, I_m) \bigg].
\end{split}
\]
 By Lemma~\ref{Lem:Habdank}, $[E_n(A,I_0),E_n(A,I_1)]\subseteq \GL_n(A,I_0I_1+I_1I_0)$. Thus 
\[
\begin{split}
 \bigg [\Big[\big[E_n(A,I_0),E_n(A,I_1)\big ],E_n(A, I_2)\Big],&\GL_n(A, I_3),\ldots, \GL_n(A, I_m) \bigg] \subseteq \\
 & \bigg [\Big[\GL_n(A,I_0I_1+I_1I_0),E_n(A, I_2)\Big],\GL_n(A, I_3),\ldots, \GL_n(A, I_m) \bigg].
\end{split}
\]
Since there are $m$ groups involved in the commutator subgroups in the right hand side, by induction we get 
\[
\begin{split}
 \bigg [\Big[\GL_n(A,I_0I_1+I_1I_0),E_n(A, I_2)\Big] &,\GL_n(A, I_3),\ldots, \GL_n(A, I_m) \bigg]=\\
& \bigg [\Big[E_n(A,I_0I_1+I_1I_0),E_n(A, I_2)\Big],E_n(A, I_3),\ldots, E_n(A, I_m) \bigg].
\end{split}
\]
Finally again by Lemma~\ref{Lem:Habdank}, $E_n(A,I_0I_1+I_1I_0) \subseteq \big[E_n(A,I_0),E_n(A,I_1)\big]$. Replacing this in the above equation we obtain that the left hand side of~(\ref{corll11}) is contained in the right hand side. The opposite inclusion is obvious. This completed the proof. 
\end{proof}

The following corollary shows in fact it doesn't matter where the elementary subgroup appears in the multiple commutator formula. 

\begin{Cor}\label{comain2}
Let $A$ be  a quasi-finite ring with identity and $I_i $, $i=0,...,m$, be two-sided ideals of $A$. 
Let $G_i$ be  subgroups of $\GL_n(A)$ such that 
$$
E_n(A,I_i) \subseteq G_i\subseteq \GL_n(A, I_i),\quad\text{ for } i=0,\ldots, m.
$$
If there is an index $j$ such that $G_j=E_n(A,I_j)$, then 
$$
\big[G_0,G_1,\ldots, G_m\big]=\Big[E_n(A,I_0),E_n(A,I_1),E_n(A, I_2),\ldots, E_n(A, I_m) \Big].
$$

\end{Cor}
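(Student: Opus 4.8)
The plan is to bootstrap from Corollary~\ref{comain}, which already settles the case where the elementary subgroup occupies the leading slot of the left-normed commutator; the only real work is to move the elementary factor $G_j=E_n(A,I_j)$ to the front, and the tools for this are the identity $[H,K]=[K,H]$ (item~(C6)), monotonicity of commutators (if $H_i\subseteq H_i'$ for all $i$ then $[H_1,\dots,H_k]\subseteq[H_1',\dots,H_k']$, since $[H,K]$ is generated by the $[h,k]$), and Lemma~\ref{Lem:Habdank}. First I would fix an index $j$ with $G_j=E_n(A,I_j)$. Since $E_n(A,I_i)\subseteq G_i\subseteq \GL_n(A,I_i)$ for all $i$, monotonicity gives
\[
\big[E_n(A,I_0),\ldots,E_n(A,I_m)\big]\ \subseteq\ \big[G_0,G_1,\ldots,G_m\big]\ \subseteq\ C_j,
\]
where $C_j:=\big[\GL_n(A,I_0),\ldots,\GL_n(A,I_{j-1}),E_n(A,I_j),\GL_n(A,I_{j+1}),\ldots,\GL_n(A,I_m)\big]$. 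Writing $D:=\big[E_n(A,I_0),\ldots,E_n(A,I_m)\big]$, it therefore suffices to prove $C_j\subseteq D$, since the display then forces all three groups to coincide. For $j=0$ this is precisely Corollary~\ref{comain}, so from now on I assume $j\ge 1$.

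For $j\ge 1$, set $B:=\big[\GL_n(A,I_0),\ldots,\GL_n(A,I_{j-1})\big]$ and $B_E:=\big[E_n(A,I_0),\ldots,E_n(A,I_{j-1})\big]$ (so $B=\GL_n(A,I_0)$ and $B_E=E_n(A,I_0)$ when $j=1$). Applying (C6) to the innermost bracket, $[B,E_n(A,I_j)]=[E_n(A,I_j),B]$, rewrites $C_j$ as the left-normed commutator $\big[E_n(A,I_j),B,\GL_n(A,I_{j+1}),\ldots,\GL_n(A,I_m)\big]$, which now carries the elementary subgroup in front. The key auxiliary step is to produce a single two-sided ideal $L$ of $A$ with
\[
E_n(A,L)\ \subseteq\ B_E\ \subseteq\ B\ \subseteq\ \GL_n(A,L).
\]
One takes $L=L_{j-1}$, where $L_1:=I_0I_1+I_1I_0$ and $L_k:=L_{k-1}I_k+I_kL_{k-1}$: the inclusion $B\subseteq\GL_n(A,L)$ follows by iterating the elementary congruence $[\GL_n(A,X),\GL_n(A,Y)]\subseteq\GL_n(A,XY+YX)$ (a direct matrix calculation; compare the proof of Lemma~\ref{Lem:New1}), and $E_n(A,L)\subseteq B_E$ follows by iterating Lemma~\ref{Lem:Habdank} together with monotonicity, step by step peeling off one ideal at a time.

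With $L$ in hand I would finish with the chain
\[
\begin{split}
C_j=\big[E_n(A,I_j),B,\GL_n(A,I_{j+1}),\ldots,\GL_n(A,I_m)\big]
&\subseteq \big[E_n(A,I_j),\GL_n(A,L),\GL_n(A,I_{j+1}),\ldots,\GL_n(A,I_m)\big]\\
&= \big[E_n(A,I_j),E_n(A,L),E_n(A,I_{j+1}),\ldots,E_n(A,I_m)\big]\\
&\subseteq \big[E_n(A,I_j),B_E,E_n(A,I_{j+1}),\ldots,E_n(A,I_m)\big]\\
&= \big[B_E,E_n(A,I_j),E_n(A,I_{j+1}),\ldots,E_n(A,I_m)\big]=D,
\end{split}
\]
where the first and third inclusions are monotonicity, the middle equality is Corollary~\ref{comain} applied to the ideals $I_j,L,I_{j+1},\ldots,I_m$ (valid since the elementary factor is now in front), the fourth step is (C6) applied once more to the innermost bracket, and the last equality merely unfolds $B_E=[E_n(A,I_0),\ldots,E_n(A,I_{j-1})]$ inside the left-normed commutator. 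This yields $C_j\subseteq D$, hence $[G_0,\ldots,G_m]=D$.

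The cases $j=1$ (where $B$ is a single congruence subgroup, $L=I_0$) and $j=m$ (where there is no trailing string of $\GL_n$'s, and the middle equality is just the generalized commutator formula) are automatically covered by the same argument. I expect the only point needing genuine care to be the construction of the single ideal $L$ serving simultaneously as an inner approximation to $B_E$ and an outer approximation to $B$ — this is exactly what lets one pass from $B$ to $\GL_n(A,L)$ and then back down to $B_E$ without loss; everything else is formal manipulation with (C6), monotonicity, Lemma~\ref{Lem:Habdank}, and the already-established multiple commutator formula with the elementary factor at the front, so no substantive obstacle is anticipated.
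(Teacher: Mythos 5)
Your proposal is correct and follows essentially the same route as the paper's own proof: both collapse the prefix $[G_0,\dots,G_{j-1}]$ between $E_n(A,\mathcal I_{j-1})$ and $\GL_n(A,\mathcal I_{j-1})$ for the same inductively defined ideal ($\mathcal I_k=I_k\mathcal I_{k-1}+\mathcal I_{k-1}I_k$, your $L_k$), using Lemma~\ref{Lem:New1} with $t=1$ and Lemma~\ref{Lem:Habdank}, move the elementary factor across the innermost bracket with (C6), and then reduce to Corollary~\ref{comain}. The only difference is bookkeeping: the paper first proves the inner equality $[G_0,\dots,G_{j-1},E_n(A,I_j)]=[E_0,\dots,E_j]$ via the two-fold generalized commutator formula and then applies Corollary~\ref{comain} to the tail, whereas you apply Corollary~\ref{comain} once to the ideal list $(I_j,L,I_{j+1},\dots,I_m)$ after flipping $E_j$ to the front.
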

\begin{proof}
Define two-sided ideals of $A$ inductively as follows
\begin{eqnarray*}
\mathcal I_0&=&I_0 \\ \mathcal I_{k}&=&I_{k}\mathcal I_{k-1} +\mathcal I_{k-1} I_{k},
\end{eqnarray*} where $k=1,\dots,m$. 
Now the proof of the lemma divides into several cases:

If $j=0$, the proof follows directly from Corollary~\ref{comain}. If $j=1$, by a basic property of $2$-fold commutator subgroups,  
$\big[G_0, E_n(A,I_1)\big]=\big[E_n(A,I_1), G_0\big]$, so we reduce the problem to the case of $j=0$.

When $j=k$ with $k\ge 2$, we have 
\begin{eqnarray*}
\big [G_0,G_1,\ldots, G_k, G_{k+1},\ldots, G_m\big]&=&\Big [\big[G_0,G_1,\ldots G_k\big], G_{k+1},\ldots, G_m\Big]\\
&=&\Big[\big[G_0,G_1,\ldots, G_{k-1}, E_n(A,I_k)\big], G_{k+1},\ldots, G_m\Big].
\end{eqnarray*}
Furthermore,  $\big [G_0,G_1,\ldots, E_n(A,I_k)\big]=\Big[\big[G_0,G_1,\ldots, G_{k-1}\big], E_n(A,I_k)\Big]$,  and it follows from Lemma~\ref{Lem:New1}, by putting $t=1$,  that $\big[G_0,G_1,\ldots, G_{k-1}\big]\subseteq \GL_n(A, \mathcal I_{k-1})$. By the generalized commutator formula and Lemma~\ref{Lem:Habdank} we have 
\begin{align*}
\Big [\big [E_n(A,I_0),E_n(A,I_1),\ldots, E_n(A,I_{k-1})\big], E_n(A,I_k)\Big] & \subseteq \Big [\big [G_0,G_1,\ldots G_{k-1}\big], E_n(A,I_k)\Big]\\
& \subseteq  \big [\GL_{n}(A, \mathcal I_{k-1}), E_n(A,I_k)\big]\\
& =  \big [E_{n}(A, \mathcal I_{k-1}), E_n(A,I_k)\big]\\
&\subseteq \Big [\big [E_n(A,I_0),E_n(A,I_1),\ldots, E_n(A,I_{k-1})\big], E_n(A,I_k)\Big].
\end{align*}
So 
$$
\big [G_0,G_1,\ldots, G_{k-1},E_n(A,I_k)\big]=\Big [\big [E_n(A,I_0),E_n(A,I_1),\ldots, E_n(A,I_{k-1})\big], E_n(A,I_k)\Big],
$$
and therefore  
$$
\big [G_0,G_1,\ldots, G_m\big ]=\big [E_n(A,I_0),E_n(A,I_1),\ldots, E_n(A,I_{k-1}), E_n(A,I_k),G_{k+1},\ldots, G_{m}\big].
$$
Finally, we finish the proof by applying Corollary~\ref{comain}. 
\end{proof}
We finish the paper by a most general multiple commutator formula.
Note that taking a commutator is a binary operation, and for $G_1,\dots,G_n$, $n\geq 3$, there any many ways to insert the commutator  brackets $[\_, \_ ]$ to make the sequence into a meaningful multi-commutator expression. For example for $n=4$, we can have the following two arrangements  $\Big[\big [G_0,[G_1,G_2]\big], G_3\Big]$ and $\Big[\big[G_0,G_1\big],\big[G_2, G_3\big]\Big]$ among many others. We denote by $\big \llbracket G_0,G_1,\ldots,G_m \big \rrbracket$ a ``meaningful'' multi-commutator formula.

\begin{The}\Label{main2}
Let $A$ be  a quasi-finite ring with identity and $I_i $, $i=0,...,m$, be two-sided ideals of $A$. Let $G_i$ be  subgroups of $\GL_n(A)$ such that 
$$
E_n(A,I_i)\subseteq G_i\subseteq \GL_n(A, I_i),\quad\text{ for } i=0,\ldots, m.
$$
If there is an index $j$ such that $G_j=E_n(A,I_j)$, then 
$$
\big \llbracket G_0,G_1,\ldots, G_m\big \rrbracket=\big \llbracket E_n(A,I_0),E_n(A,I_1),\ldots,E_n(A,I_m) \big \rrbracket.
$$
\end{The}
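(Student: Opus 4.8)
The plan is to prove Theorem~\ref{main2} by induction on $m$, reducing an arbitrary bracketing $\big\llbracket G_0,\ldots,G_m\big\rrbracket$ to the standard form $[G_0,G_1,\ldots,G_m]$ handled in Corollary~\ref{comain2}. The key observation is that every meaningful multi-commutator expression $\big\llbracket G_0,\ldots,G_m\big\rrbracket$ has the shape $\big[\big\llbracket G_0,\ldots,G_s\big\rrbracket,\big\llbracket G_{s+1},\ldots,G_m\big\rrbracket\big]$ for some split point $s$ (after relabeling the indices so that the two sub-brackets involve consecutive blocks; this relabeling is harmless because of (C6), i.e. $[H,K]=[K,H]$). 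The elementary factor $E_n(A,I_j)$ sits in exactly one of the two sub-expressions, say the first. Then I would use Lemma~\ref{Lem:New1} with $t=1$ to bound each sub-bracket from above by a congruence subgroup: setting $\mathcal J$ equal to the appropriate ``product ideal'' built from $\{I_0,\ldots,I_s\}$ and $\mathcal K$ from $\{I_{s+1},\ldots,I_m\}$ (as in the inductive definition of $\mathcal I_k$ in the proof of Corollary~\ref{comain2}), one gets $\big\llbracket G_0,\ldots,G_s\big\rrbracket\subseteq\GL_n(A,\mathcal J)$ and $\big\llbracket G_{s+1},\ldots,G_m\big\rrbracket\subseteq\GL_n(A,\mathcal K)$.

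The heart of the argument is then a two-sided sandwich. For the lower bound, each sub-expression contains the corresponding all-elementary bracket: by the inductive hypothesis applied to the first block (which still contains the distinguished elementary factor $G_j=E_n(A,I_j)$) we get $\big\llbracket E_n(A,I_0),\ldots,E_n(A,I_s)\big\rrbracket\subseteq\big\llbracket G_0,\ldots,G_s\big\rrbracket$, and by Lemma~\ref{Lem:Habdank} the all-elementary bracket on the first block contains $E_n(A,\mathcal J)$; similarly $E_n(A,\mathcal K)\subseteq\big\llbracket G_{s+1},\ldots,G_m\big\rrbracket$. For the upper bound we combine the congruence containments above with the generalized commutator formula: since the first sub-bracket contains $E_n(A,\mathcal J)$ and is contained in $\GL_n(A,\mathcal J)$, and likewise for $\mathcal K$, we have
\[
\big[E_n(A,\mathcal J),E_n(A,\mathcal K)\big]\subseteq\big[\big\llbracket G_0,\ldots,G_s\big\rrbracket,\big\llbracket G_{s+1},\ldots,G_m\big\rrbracket\big]\subseteq\big[\GL_n(A,\mathcal J),\GL_n(A,\mathcal K)\big],
\]
and by Corollary~\ref{comain} (in its two-ideal incarnation, i.e. the generalized commutator formula, applied after noting $E_n(A,\mathcal J)$ is present) the outer two terms collapse to $\big[E_n(A,\mathcal J),E_n(A,\mathcal K)\big]$. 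Hence $\big\llbracket G_0,\ldots,G_m\big\rrbracket=\big[E_n(A,\mathcal J),E_n(A,\mathcal K)\big]$, and running the same computation with every $G_i$ replaced by $E_n(A,I_i)$ shows $\big\llbracket E_n(A,I_0),\ldots,E_n(A,I_m)\big\rrbracket$ equals the same group. This proves the two bracketings coincide.

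One caveat in the induction: if the distinguished index $j$ happens to make one of the two blocks a singleton $\{G_j\}$ (so one sub-bracket is literally $E_n(A,I_j)$ with no bracket to resolve), the inductive step degenerates, but then the whole expression is of the form $\big[E_n(A,I_j),\big\llbracket G_{\text{rest}}\big\rrbracket\big]$ and, after using (C6) to move $E_n(A,I_j)$ to the outside and Lemma~\ref{Lem:New1} to control $\big\llbracket G_{\text{rest}}\big\rrbracket$ inside a congruence subgroup $\GL_n(A,\mathcal L)$, the generalized commutator formula $\big[E_n(A,I_j),\GL_n(A,\mathcal L)\big]=\big[E_n(A,I_j),E_n(A,\mathcal L)\big]$ finishes it (together with the sandwich by Lemma~\ref{Lem:Habdank} as before). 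The base cases $m=1$ and $m=2$ are the generalized commutator formula and Theorem~\ref{main} (equivalently Corollary~\ref{comain2}) respectively.

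I expect the main obstacle to be purely bookkeeping rather than conceptual: precisely formulating ``meaningful multi-commutator expression'' as a binary tree, defining the product ideal $\mathcal J$ attached to an arbitrary sub-tree (not just the left-nested ones of Corollary~\ref{comain2}), and checking that Lemma~\ref{Lem:New1} indeed yields $\big\llbracket G_{i_1},\ldots,G_{i_r}\big\rrbracket\subseteq\GL_n(A,\mathcal J)$ for \emph{any} bracketing by a straightforward sub-induction on the tree (each internal node multiplies the two child ideals on both sides and adds). Once that is in place, the sandwich argument is uniform and the only real inputs are Lemma~\ref{Lem:Habdank}, the generalized commutator formula, and the inductive hypothesis; no new conjugation calculus in $A_t$ is needed, since all the hard localization work has already been absorbed into Theorem~\ref{main}.
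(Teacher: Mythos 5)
Your top-level decomposition $\big[\llbracket G_0,\ldots,G_s\rrbracket,\llbracket G_{s+1},\ldots,G_m\rrbracket\big]$, the containments obtained from Lemma~\ref{Lem:New1} (with $t=1$) and Lemma~\ref{Lem:Habdank} by sub-induction over the bracketing tree, and your ``caveat'' case where the distinguished block is the singleton $E_n(A,I_j)$ are all fine. The gap is the collapse step: you claim that the outer terms of your sandwich satisfy $\big[\GL_n(A,\mathcal J),\GL_n(A,\mathcal K)\big]=\big[E_n(A,\mathcal J),E_n(A,\mathcal K)\big]$ ``by the generalized commutator formula, after noting $E_n(A,\mathcal J)$ is present''. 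The generalized commutator formula $\big[E_n(A,I),\GL_n(A,J)\big]=\big[E_n(A,I),E_n(A,J)\big]$ requires one of the two outer arguments to \emph{be} a relative elementary subgroup; the identity you invoke, with both arguments full congruence subgroups, is the Mason--Stothers formula, which the paper explicitly notes fails for general module finite rings (counterexamples in \cite{B4}) and which is precisely why the hypothesis $G_j=E_n(A,I_j)$ is needed at all. Knowing that the block containing the fixed elementary factor is squeezed between $E_n(A,\mathcal J)$ and $\GL_n(A,\mathcal J)$ does not make it a relative elementary subgroup; even after applying your inductive hypothesis it only equals $\llbracket E_n(A,I_0),\ldots,E_n(A,I_s)\rrbracket$, which need not be of the form $E_n(A,\text{ideal})$. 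So neither your upper bound nor the companion claim $\llbracket E_n(A,I_0),\ldots,E_n(A,I_m)\rrbracket=\big[E_n(A,\mathcal J),E_n(A,\mathcal K)\big]$ follows. What your route actually needs is something like $\big[\llbracket E_n(A,I_0),\ldots,E_n(A,I_s)\rrbracket,\GL_n(A,\mathcal K)\big]=\big[\llbracket E_n(A,I_0),\ldots,E_n(A,I_s)\rrbracket,E_n(A,\mathcal K)\big]$, and that is exactly the hard localization content of Theorem~\ref{main} and Corollary~\ref{comain}, available there only in left-nested standard form with the $\GL_n$-factors attached on the outside.

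The paper's proof takes a different, structural route that avoids this: if the expression contains an innermost two-leaf commutator $[G_i,G_{i+1}]$ in which neither factor is the fixed elementary subgroup, it is absorbed into $\GL_n(A,I_iI_{i+1}+I_{i+1}I_i)$, the inductive hypothesis is applied to the resulting expression with one fewer ideal (which still contains the fixed elementary factor), and then $E_n(A,I_iI_{i+1}+I_{i+1}I_i)\subseteq\big[E_n(A,I_i),E_n(A,I_{i+1})\big]$ from Lemma~\ref{Lem:Habdank} re-inserts the commutator; if no such innermost commutator exists, the bracketing is forced, up to (C6), into the standard form $[G_{i_0},G_{i_1},\ldots,G_{i_m}]$ and Corollary~\ref{comain2} applies. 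Your singleton caveat covers only the special case where the fixed elementary factor is a top-level block by itself; for the general inductive step you need one of these two mechanisms rather than the Mason--Stothers collapse.
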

\begin{proof}
For simplicity denote $E_n(A,I_i)$ by $E_i$. 
The proof is by induction on $m$. For $m=0$ and $m=1$ there is nothing to prove.   For m=2, the commutator  $\big \llbracket G_0,G_1, G_2\big \rrbracket$ can take one of the the forms 
\begin{enumerate}
\item $\Big[\big[G_0,G_1\big], E_2\Big ],$ 
\item $\Big[E_0,\big[G_1,G_2\big]\Big ],$
\item $\Big[\big[E_0,G_1\big], G_2\Big ],$ 
\item $\Big[\big[G_0,E_1\big], G_2\Big ],$ 
\item  $\Big[G_0,\big[E_1,G_2\big]\Big ],$
\item $\Big[G_0,\big[G_1,E_2\big]\Big ]$.
\end{enumerate}
 
Since by (C(6)), for two subgroups $H$ and $K$, we have $[H,K]=[K,H]$, we can reduce the cases (1) and (2) and (3)--(6) to each other, respectively, and therefore it is enough to prove the theorem for the cases (1) and (3). For the first arrangement (1),
using Lemma~\ref{Lem:New1}, for $t=1$, Lemma~\ref{Lem:Habdank} and the generalized commutator formula we have 
\begin{align*}
\big \llbracket E_0,E_1, E_2\big \rrbracket=\Big[\big[E_0,E_1\big], E_2\Big ] &  \subseteq \Big[\big[G_0,G_1\big], E_2\Big ]\\
& \subseteq \Big[\big[\GL_n(A,I_0),\GL_n(A,I_1)\big], E_n(A,I_2)\Big ]\\
& \subseteq \Big[\GL_n(A,I_0I_1+I_1I_0), E_n(A,I_2)\Big ]\\
& = \Big[E_n(A,I_0I_1+I_1I_0), E_n(A,I_2)\Big ]\\
& \subseteq \Big[\big[E_n(A,I_0),E_n(A,I_1)\big], E_n(A,I_2)\Big ]\\
&= \Big[\big[E_0,E_1\big], E_2\Big ]  = \big \llbracket E_0,E_1, E_2\big\rrbracket. 
\end{align*}
This shows that $\big \llbracket G_0,G_1, G_2\big\rrbracket=\big \llbracket E_0,E_1, E_2\big\rrbracket.$ 
The arrangement (3) (and therefore (4)--(6)) follows immediately from Theorem~\ref{main}. 

For the main step of induction, we consider two cases. Suppose first there is  a mixed commutator $[G_i,G_{i+1}]$ in $\big \llbracket G_0,G_1,\ldots, G_m\big \rrbracket$, where neither $G_i$ nor $G_{i+1}$ is the {\it fixed} elementary subgroup $E_j$. Then 
\begin{align}
\big \llbracket G_0,G_1,\ldots,G_m \big \rrbracket & =\big \llbracket G_0,G_1,\ldots,[G_i,G_{i+1}],\dots , G_m \big \rrbracket\notag  \\
&\subseteq \big \llbracket G_0,G_1,\ldots,\big [\GL_n(A,I_i),\GL_n(A,I_{i+1})\big],\dots , G_m \big \rrbracket \label{ryan1} \\
&\subseteq \big \llbracket G_0,G_1,\ldots,\GL_n(A,I_iI_{i+1}+I_{i+1}I_i),\dots , G_m \big \rrbracket.\notag
\end{align}
Note that there are one fewer ideal involved in the last commutator formula (i.e., $m-1$ ideals) which also contains an elementary subgroup, and so by induction 
\begin{align}
\big \llbracket G_0,G_1,\ldots,\GL_n(A,I_iI_{i+1}+I_{i+1}I_i),\dots, G_m \big \rrbracket &=\big \llbracket E_0,E_1,\ldots,E_n(A,I_iI_{i+1}+I_{i+1}I_i),\dots, E_m\big \rrbracket \notag \\
&\subseteq \big \llbracket E_0,E_1,\ldots,\big[E_n(A,I_i),E_n(A,I_{i+1})\big],\dots, E_m\big \rrbracket \label{ryan2} \\
&= \big \llbracket E_0,E_1,\ldots,E_m \big \rrbracket. \notag
\end{align}
Putting~\ref{ryan1} and~\ref{ryan2} together, we get 
$$
\big \llbracket G_0,G_1,\ldots,G_m \big \rrbracket = \big \llbracket E_0,E_1,\ldots,E_m \big \rrbracket.
$$
For the remaining case, suppose now that if there is a mixed commutator of the form $[G_i,G_{i+1}]$, in $\big \llbracket G_0,G_1,\ldots, G_m\big \rrbracket$, then one of $G_i$ or $G_{i+1}$ is our fixed elementary subgroup $E_j$. Write 
$$
\big \llbracket G_0,G_1,\ldots, G_m\big \rrbracket=\Big[\big \llbracket G_0,G_1,\ldots, G_k\big \rrbracket, \big \llbracket G_{k+1},\ldots, G_m\big \rrbracket \Big].
$$ Since the fixed elementary subgroup $E_j$ is in one of the factors, one of  $\big \llbracket G_0,G_1,\ldots, G_k\big \rrbracket$ or $ \big \llbracket G_{k+1},\ldots, G_m\big \rrbracket$ has to have a mixed commutator of the form $[G_{i'},G_{i'+1}]$ with neither $G_{i'}$ nor $G_{i'+1}$ the fixed $E_j$, which has been excluded from the outset. This forces $k=0$ or $k=m-1$, i.e., 
\begin{align*}
\big \llbracket G_0,G_1,\ldots, G_m\big \rrbracket & =\Big[G_0, \big \llbracket G_{1},\ldots, G_m\big \rrbracket \Big], \text{ or     }  \\
\big \llbracket G_0,G_1,\ldots, G_m\big \rrbracket & =\Big[\big \llbracket G_0,G_1,\ldots, G_{m-1}\big \rrbracket, G_m \Big].
\end{align*}
Repeating this argument, by an easy induction and (C(6)) one can see that because of absence of $[G_i,G_{i+1}]$, the multiple commutator $\big \llbracket G_0,G_1,\ldots, G_m\big \rrbracket$ has the standard form (see \S\ref{sub:1.4})
$$
\big \llbracket G_0,G_1,\ldots, G_m\big \rrbracket =\big [G_{i_0},G_{i_1},\dots,G_{i_m}\big].
$$ By Corollary~\ref{comain2} 
$$
\big[G_{i_0},G_{i_1},\dots,G_{i_m}\big]=\big[E_{i_0},E_{i_1},\dots,E_{i_m}\big]. 
$$
Now using (C(6)) again and re-arranging $E_i$ in the reverse order we get
$$
\big[E_{i_0},E_{i_1},\dots,E_{i_m}\big]=\big \llbracket E_0,E_1,\ldots, E_m\big \rrbracket. 
$$ This finishes the proof.
\end{proof}

\end{document}